\title{\bf Rigorous numerical computations for 1D advection equations with variable coefficients}
\author{
Akitoshi Takayasu\thanks{Faculty of Engineering, Information and Systems, University of Tsukuba, 1-1-1 Tennodai, Tsukuba, Ibaraki 305-8573, Japan (\texttt{takitoshi@risk.tsukuba.ac.jp})},~
Suro Yoon\thanks{Department of Risk Engineering, University of Tsukuba (\texttt{s1620591@u.tsukuba.ac.jp}).},~
and Yasunori Endo\thanks{Faculty of Engineering, Information and Systems, University of Tsukuba (\texttt{endo@risk.tsukuba.ac.jp}).}
}
\date{\today}
\newtheorem{theorem}{Theorem}[section]    
\newtheorem{lemma}[theorem]{Lemma}          
\theoremstyle{definition}
\newtheorem{definition}[theorem]{Definition}    
\newtheorem{remark}{Remark}             
\begin{document}
\maketitle
\begin{abstract}
This paper provides a methodology of verified computing for solutions to 1-dimensional advection equations with variable coefficients.
The advection equation is typical partial differential equations (PDEs) of hyperbolic type.
There are few results of verified numerical computations to initial-boundary value problems of hyperbolic PDEs.
Our methodology is based on the spectral method and semigroup theory.
The provided method in this paper is regarded as an efficient application of semigroup theory in a sequence space associated with the Fourier series of unknown functions.
This is a foundational approach of verified numerical computations for hyperbolic PDEs.
Numerical examples show that the rigorous error estimate showing the well-posedness of the exact solution is given with high accuracy and high speed.
\end{abstract}
\par
{\bf Keywords:}~1D variable coefficient advection equation, verified numerical computation, $C_0$ semigroup, rigorous error bound, Fourier-Chebyshev spectral method
\par
\bigskip
{\bf AMS subject classifications : } 65G40, 65M15, 65M70, 35L04 


\section{Introduction}
Let $\mathbb{R}$ be the set of real numbers and let $\Omega=(0,2\pi)\subset\mathbb{R}$.
In this paper, we consider smooth solutions of the following initial-boundary value problem of 1-dimensional advection equations with variable coefficients: 
\begin{align}\label{eqn:advec_Eq}
\begin{cases}
u_t+c(x)u_x=0,~x\in \Omega,~t>0,\\
u(0,t)=u(2\pi,t),~t>0,\\
u(x,0)=u_0(x),~x\in\Omega,
\end{cases}
\end{align}
where $(x,t)$ are the space-time variables.
Partial derivatives of $u(x,t)$ are denoted by $u_t:=\frac{\partial}{\partial t}u(x,t)$ and $u_x:=\frac{\partial}{\partial x}u(x,t)$.
The variable coefficient $c(x)$ is a space-dependent positive real-valued function satisfying the periodic boundary condition.
We also require that the initial function $u_0(x)$ is a periodic function.
It is well-known (cf., e.g., \cite{bib:Evans1998}) that the advection equation \eqref{eqn:advec_Eq} is a typical partial differential equation (PDE) of hyperbolic type.
It appears in mathematical models of the traffic stream (nonlinear waves), compressive fluids, and systems of conservation law, etc.
The well-posedness of \eqref{eqn:advec_Eq} is shown on a suitable function space \cite{bib:hesthaven2007}.
It is also known (cf.\,\cite{bib:Evans1998}) that behavior of the solution follows the \emph{characteristic curve}.
For example, if $\Omega=\mathbb{R}$ and a curve $x\equiv X(t)$ defined on $x$--$t$ upper half-plane satisfies
\[
	\frac{dX}{dt}=c\left(X(t)\right),
\]
then the solution of \eqref{eqn:advec_Eq} without the boundary condition is expressed by $u(x,t)=u_0\left(X(t)\right)$.
This means that the solution shows a flow of initial distribution.
Moreover, if two characteristic curves cross, a singular solution so-called \emph{shock wave} occurs.
In such a case, regularity of the solution is lost any more and one needs to discuss weak solutions in the distributional sense.

Under the above analytic background, the aim of this paper is to figure out behavior of smooth solutions of \eqref{eqn:advec_Eq} rigorously by using numerical computations.
Basically, numerical computations may help us to understand the solution quantitatively but there include several kind of errors for computing solutions numerically.
What is worse, many difficulties appear in numerical computing for the solution  of \eqref{eqn:advec_Eq}.
Numerical computations often become unstable.
The dissipation of numerical solutions is sometimes caused by the discretization of PDEs, which is so-called \emph{numerical dispersibility}.
The propagation of velocity is also changed by the discretization.
Furthermore, in the viewpoint of \emph{verified numerical computations}, there are few results \cite{bib:Minamoto1997,bib:Minamoto2001,bib:Minamoto2001a,bib:Nakao1994} for initial-boundary value problems of hyperbolic PDEs.
Verified numerical computations for PDEs have been established by Nakao \cite{bib:Nakao1988} and Plum \cite{bib:Plum1991} independently.
These have been developed in the last three decades by their collaborators and many researchers in the field of dynamical systems (see, e.g., \cite{bib:Figueras2016,bib:Mizuguchi2014,bib:Nakao2014,bib:Nakao2011,bib:plum_survey,bib:Takayasu2013,bib:Nobito1998,bib:Zgliczynski2001} and references therein).
Recently, verified numerical computations enable us to understand \emph{traveling-waves, periodic solutions, invariant objects} (including \emph{stationary solutions}) of parabolic/elliptic PDEs, etc.
For such a research field, it is a challenging task for introducing a methodology of verified numerical computations for hyperbolic PDEs.

One of our main tools in this paper is \emph{semigroup} in the classical analysis.
For initial-boundary value problems of parabolic PDEs, the first author and his collaborators have introduced a methodology of verified numerical computations \cite{bib:Mizuguchi2014,bib:Mizuguchi2017,bib:Takayasu2017} using semigroup theory.
Such a method is based on the \emph{analytic semigroup} generated by the Laplacian $\Delta$.
The semigroup is a solution operator of the Cauchy problem.
For example, a solution of the heat equation
\[
	u_t=\Delta u~\mbox{(with boundary condition)},~u(x,0)=u_0(x)
\]
is expressed by $u(x,t)=e^{\Delta t}u_0(x)$ using the analytic semigroup $\{e^{\Delta t}\}_{t\ge0}$ on a certain Banach space $X$.
These studies \cite{bib:Mizuguchi2014,bib:Mizuguchi2017,bib:Takayasu2017} have achieved an efficient combination of \emph{operator theory} and verified numerical computations.
Such a combination can be expected to provide an approach to many unsolved problems of PDEs.

Another tool of the present paper is the \emph{spectral method} (cf. \cite{bib:boyd2013,bib:hesthaven2007,bib:Trefethen2000}) for numerically computing the solution of \eqref{eqn:advec_Eq}.
The most advantage of using the spectral method is accuracy of numerical solutions.
If a solution is smooth enough, a numerically computed approximate solution is much more accurate than that by other numerical methods, e.g., the finite difference method, the finite element method, etc.
On the other hand, the spectral method restricts boundary conditions of problems and the shape of domain $\Omega$.
It is moreover difficult to deal with the convolution term for nonlinear problems.
Although flawed, accurate numerical approximate solution is useful for verified computing to PDEs.
In particular, the decay property of coefficients of series expansion gives great benefit to verified numerical computations.
By making good use of such a property, verification methods for analytic solutions to PDEs have been proposed in \cite{bib:Figueras2016,bib:Hungria2016,bib:Lessard2017}.
In our study, inspired by these studies, we construct a solution using the Fourier series with respect to $x$-variable and the Chebyshev series with respect to $t$-variable.
These series expansions are appropriate for the initial-boundary value problem \eqref{eqn:advec_Eq}.

The main contribution of this paper is to provide a method of verified computing by using sequence spaces, which come from the Fourier series of the unknown function.
Our methodology is based on the spectral method \cite{bib:boyd2013,bib:hesthaven2007,bib:Trefethen2000}, i.e., we try to enclose the Fourier coefficients of the exact solution of \eqref{eqn:advec_Eq}.
Let $\mathbb{Z}$ be a set of integers and let $i=\sqrt{-1}$ denotes the imaginary unit.
Considering the Fourier series of the unknown function and that of the variable coefficient in \eqref{eqn:advec_Eq}
\[
	u(x,t)=\sum_{k\in\mathbb{Z}}a_k(t)e^{ikx},\quad c(x)=\sum_{k\in\mathbb{Z}}c_ke^{ikx},
\]
the advection equation \eqref{eqn:advec_Eq} is equivalent to the following infinite-dimensional system of ordinary differential equations (ODEs):
\begin{align}\label{eqn:infODEs}
\frac{d}{dt}a_k(t)+\sum_{m\in\mathbb{Z}}c_{k-m}ima_m(t)=0,~k\in\mathbb{Z}.
\end{align}
Let $a(t):=(a_k(t))_{k\in\mathbb{Z}}$ for $t>0$ and $c:=(c_k)_{k\in\mathbb{Z}}$.
We consider \eqref{eqn:infODEs} in a certain sequence space $X$.
Then, \eqref{eqn:infODEs} can be represented by the $X$-valued Cauchy problem
\[
	\frac{d}{dt}a(t)-Aa(t)=0~\mbox{in}~X,
\]
where $A$ is an operator defined by
\begin{align}\label{eqn:opA}
	Aa(t):=-\left(\sum_{m\in\mathbb{Z}}c_{k-m}ima_m(t)\right)_{k\in\mathbb{Z}}.
\end{align}
We will prove that this operator $A$ generates the $C_0$ semigroup\footnote{The definition of $C_0$ semigroup is given in Section \ref{sec2}.} on $X$ under a certain assumption.
The $C_0$ semigroup is used to derive an error estimate between the exact solution and a numerically computed approximate solution.
Such a error estimate is our main result for verifying the behavior of the exact solution of \eqref{eqn:advec_Eq}.
This methodology is regarded as an efficient application of semigroup theory in the sequence space for verified numerical computations.

The rest of this paper is organized as follows: 
In Section \ref{sec2}, we briefly introduce semigroup theory in the classical analysis.
A sufficient condition of the operator $A$ generating $C_0$ semigroup on the sequence space $X$ is given in Theorem \ref{thm:sg_generate}.
We also derive a norm estimate of the $C_0$ semigroup by using an operator norm of $X$.
In Section \ref{sec3}, we introduce a rigorous error estimate using the $C_0$ semigroup, which rigorously bounds the error between the Fourier coefficients of exact solution and those of numerically computed approximate solution in the sense of $X$ topology.
Theorem \ref{thm:main_thm} is our main theorem in the present paper, which shows a sufficient condition for proving the well-posedness of \eqref{eqn:advec_Eq} in analytic category for the space variable.
Subsequently, we give a detailed estimate of initial error and that of residual.
In Section \ref{sec:num_result}, we numerically demonstrate efficiency of the provided method of verified computing.
Finally, as a conclusion, we discuss potential applications of the provided method to mathematical models of nonlinear waves.

\section{Semigroup on a sequence space}\label{sec2}
\subsection{Preliminaries from semigroup theory}
We prepare some definitions and theorems associated with semigroup theory (for the details see, e.g., \cite{bib:Ito2002,bib:pazy,bib:yagi}, etc.).
\begin{definition}\label{def:semigroup}
Let $X$ be a Banach space.
A family of bounded linear operators $\{S(t)\}_{t\ge0}$ on $X$ is called a strongly continuous semigroup if
\begin{enumerate}
\item $S(0)=I$, where $I$ is the identity operator on $X$,
\item $S(t+s)=S(t)S(s)$, for any $t,s\ge0$,
\item $\lim\limits_{t\downarrow 0}\|S(t)-I\|_{X,X}=0$,
where $\|\cdot\|_{X,X}$ denotes the operator norm on $X$.
\end{enumerate}
\end{definition}
The strongly continuous semigroup will be called simply {\em$C_0$ semigroup}.
A well-known property of the $C_0$ semigroup is given by the following theorem:
\begin{theorem}[{\cite[Theorem 2.2]{bib:pazy}}]
Let $S(t)$ be the $C_0$ semigroup.
There exist constants $\omega\ge0$ and $M\ge1$ such that
\begin{align}\label{eqn:GMome}
	\|S(t)\|_{X,X}\le Me^{\omega t},~~0\le t<\infty.
\end{align}
\end{theorem}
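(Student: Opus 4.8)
The plan is to reduce the global exponential bound \eqref{eqn:GMome} to a local boundedness statement near $t=0$ and then propagate it by the semigroup law (item 2 of Definition \ref{def:semigroup}). First I would establish that there exist $\eta>0$ and a finite constant $M\ge1$ with $\|S(t)\|_{X,X}\le M$ for all $t\in[0,\eta]$. Under the operator-norm continuity assumed in item 3 of Definition \ref{def:semigroup}, this is immediate: choosing $\eta>0$ so small that $\|S(t)-I\|_{X,X}\le 1$ whenever $0\le t\le\eta$, the triangle inequality gives $\|S(t)\|_{X,X}\le\|I\|_{X,X}+1=2$ on that interval; since $\|S(0)\|_{X,X}=\|I\|_{X,X}=1$, the quantity $M:=\sup_{0\le t\le\eta}\|S(t)\|_{X,X}$ satisfies $1\le M\le2$, and in particular $M\ge1$.

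Second, I would upgrade this uniform bound on $[0,\eta]$ to all $t\ge0$ using the homomorphism property. For arbitrary $t\ge0$, write $t=n\eta+\delta$ with $n\in\mathbb{N}\cup\{0\}$ and $0\le\delta<\eta$. Applying item 2 of Definition \ref{def:semigroup} repeatedly yields $S(t)=S(\delta)S(\eta)^n$, so submultiplicativity of the operator norm gives
\[
\|S(t)\|_{X,X}\le\|S(\delta)\|_{X,X}\,\|S(\eta)\|_{X,X}^n\le M\cdot M^n=M^{n+1}.
\]
Setting $\omega:=\eta^{-1}\log M\ge0$ and using $n\le t/\eta$, we obtain $M^n=e^{n\log M}\le e^{(t/\eta)\log M}=e^{\omega t}$, whence $\|S(t)\|_{X,X}\le Me^{\omega t}$ for all $0\le t<\infty$, which is the claimed estimate.

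The only delicate point is the local boundedness in the first step. Under the operator-norm continuity hypothesis stated in the excerpt it is elementary, as shown above, so here it is not a genuine obstacle. If one instead worked with the weaker pointwise continuity $\lim_{t\downarrow0}\|S(t)x-x\|_X=0$ for each $x\in X$—the usual meaning of a $C_0$ semigroup—then this local boundedness would be the real difficulty and would require the uniform boundedness principle: assuming no such $\eta$ exists produces a sequence $t_n\downarrow0$ with $\|S(t_n)\|_{X,X}\to\infty$, and by Banach--Steinhaus some $x\in X$ would satisfy $\sup_n\|S(t_n)x\|_X=\infty$, contradicting $S(t_n)x\to x$. In either formulation, once local boundedness is secured the exponential bound follows mechanically from the semigroup property, so I expect the argument to be short.
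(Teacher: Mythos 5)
Your proof is correct and is essentially the argument the paper defers to: the paper gives no proof of its own, citing \cite[Theorem 2.2]{bib:pazy}, and Pazy's proof is exactly your two steps --- local boundedness on $[0,\eta]$ (via the uniform boundedness principle under the usual strong-continuity definition) followed by $S(t)=S(\delta)S(\eta)^n$ and $\omega=\eta^{-1}\log M$. Your side remark that item 3 of Definition \ref{def:semigroup} as printed demands operator-norm convergence (making the local bound trivial, and in fact characterizing uniformly continuous semigroups) is an accurate observation about the paper's phrasing, but it does not change the argument.
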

The $C_0$ semigroup satisfying \eqref{eqn:GMome} is called the {\em $C_0$ semigroup of $G(M,\omega)$ class} (cf.~\cite{bib:Ito2002}).
\begin{definition}
The linear operator $A$ defined by
\[
	Ax:=\lim\limits_{t\downarrow 0}\frac{S(t)x-x}{t},~~x\in D(A):=\left\{x\in X:\lim\limits_{t\downarrow 0}\frac{S(t)x-x}{t}~\mbox{exists}\right\}
\]
is called the infinitesimal generator of the semigroup $S(t)$.
Here, $D(A)$ is the domain of $A$.
\end{definition}

Let $X$ be a Banach space and let $X^\ast$ be its dual space.
The dual product between $X$ and $X^*$ is denoted by $\langle\cdot,\cdot\rangle_{X,X^\ast}$.
The norms of $X$ and $X^\ast$ are denoted by $\|\cdot\|_X$ and $\|\cdot\|_{X^\ast}:=\sup_{0\not=x\in X}\frac{\left|\langle x,\cdot\rangle_{X,X^\ast}\right|}{\|x\|_X}$, respectively.
From H\"older's inequality, $|\langle x,x^\ast\rangle_{X,X^\ast}|\le\|x\|_X\|x^\ast\|_{X^\ast}$ holds for $x\in X$ and $x^\ast\in X^\ast$.
For $x\in X$, we define the duality set\footnote{From the Hahn-Banach theorem (cf., e.g., \cite{bib:Brezis2010}), it follows that $F(x)\not=\emptyset$ for any $x\in X$.} as
\begin{align}\label{eqn:dualityset}
	F(x):=\left\{x^\ast\in X^\ast:\langle
	 x,x^\ast\rangle_{X,X^\ast}=\|x\|_{X}^2=\|x^\ast\|_{X^\ast}^2\right\}.
\end{align}
\begin{definition}
Let $\omega>0$.
A linear operator $A$ is $\omega$-dissipative if for any $x\in D(A)$ there exists $x^\ast\in F(x)$ such that
\[
	\operatorname{Re}\langle Ax,x^\ast\rangle_{X,X^\ast}\le \omega\|x\|_X^2.
\]
\end{definition}

The necessary and sufficient condition for $A$ generating the $C_0$ semigroup is shown by the Lumer-Phillips theorem \cite{bib:Lumer1961}, which is equivalent to the Hille-Yosida theorem \cite{bib:hille1948,bib:yosida1948}.
\begin{theorem}[Lumer-Phillips \cite{bib:Lumer1961}]\label{thm:LPtheorem}
Let $X$ be a Banach space and let $A$ be a closed linear operator with dense domain $D(A)$ in $X$.
The followings are equivalent:
\begin{enumerate}
\item The operator $A$ is the infinitesimal generator of the $C_0$ semigroup of $G(1,\omega)$ class.
\item The operator $A$ is $\omega$-dissipative and there exists $\lambda_0>\omega$ such that the range of $\lambda_0 I-A$ is X, i.e.,
\[
	R(\lambda_0 I-A)=X.
\]
\end{enumerate}
\end{theorem}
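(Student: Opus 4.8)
The plan is to prove the two implications of the equivalence separately, using the Hille--Yosida characterization of $G(1,\omega)$-class generators as the underlying engine: a closed, densely defined operator $A$ generates a $C_0$ semigroup of $G(1,\omega)$ class if and only if every $\lambda>\omega$ lies in the resolvent set of $A$ and $\|(\lambda I-A)^{-1}\|_{X,X}\le(\lambda-\omega)^{-1}$. The entire role of the duality set $F(x)$ and of the dissipativity hypothesis is to manufacture exactly this resolvent bound, so the argument hinges on translating the geometric dissipativity inequality into an analytic estimate on $\lambda I-A$.

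First I would dispatch the direction $(1)\Rightarrow(2)$. Assuming $A$ generates the $C_0$ semigroup $\{S(t)\}_{t\ge0}$ of $G(1,\omega)$ class, we have $\|S(t)\|_{X,X}\le e^{\omega t}$. Fix $x\in D(A)$ and choose any $x^\ast\in F(x)$, which is nonempty by the Hahn--Banach theorem. Since $\langle x,x^\ast\rangle_{X,X^\ast}=\|x\|_X^2$ and $\|x^\ast\|_{X^\ast}=\|x\|_X$, H\"older's inequality gives $\operatorname{Re}\langle S(t)x-x,x^\ast\rangle_{X,X^\ast}\le(e^{\omega t}-1)\|x\|_X^2$. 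Dividing by $t>0$ and letting $t\downarrow0$, the left-hand side converges to $\operatorname{Re}\langle Ax,x^\ast\rangle_{X,X^\ast}$ by the definition of the generator and the continuity of $x^\ast$, while the right-hand side tends to $\omega\|x\|_X^2$; hence $A$ is $\omega$-dissipative. The range condition $R(\lambda_0 I-A)=X$ for any $\lambda_0>\omega$ follows immediately because $G(1,\omega)$-class generators have $(\omega,\infty)$ in their resolvent set.

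The substance lies in $(2)\Rightarrow(1)$, which I would open by converting dissipativity into the resolvent estimate. For $x\in D(A)$ and $\lambda>\omega$, pick $x^\ast\in F(x)$ as furnished by $\omega$-dissipativity. Then, using $\operatorname{Re}\langle x,x^\ast\rangle_{X,X^\ast}=\|x\|_X^2$ and $\operatorname{Re}\langle Ax,x^\ast\rangle_{X,X^\ast}\le\omega\|x\|_X^2$, one estimates
\[
\|(\lambda I-A)x\|_X\,\|x\|_X\ge\operatorname{Re}\langle(\lambda I-A)x,x^\ast\rangle_{X,X^\ast}\ge(\lambda-\omega)\|x\|_X^2,
\]
so that $\|(\lambda I-A)x\|_X\ge(\lambda-\omega)\|x\|_X$ for every $\lambda>\omega$. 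This single inequality already yields injectivity of $\lambda I-A$ and the bound $\|(\lambda I-A)^{-1}\|_{X,X}\le(\lambda-\omega)^{-1}$ wherever the inverse exists.

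The hard part will be upgrading the single-range hypothesis $R(\lambda_0 I-A)=X$ to surjectivity for every $\lambda>\omega$, which is what Hille--Yosida demands. My plan is a connectedness argument on the half-line $(\omega,\infty)$. From the hypothesis together with the estimate and the closedness of $A$, the point $\lambda_0$ lies in the resolvent set with $\|(\lambda_0 I-A)^{-1}\|_{X,X}\le(\lambda_0-\omega)^{-1}$; a Neumann-series perturbation then shows that every $\mu$ with $|\mu-\lambda_0|<\lambda_0-\omega$ is also in the resolvent set, so the whole interval $(\omega,2\lambda_0-\omega)$ consists of surjective points. Since the resolvent bound $(\lambda-\omega)^{-1}$ is available at each such point, iterating this doubling of the admissible radius starting from $\lambda_0$ exhausts all of $(\omega,\infty)$. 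With $(\omega,\infty)$ in the resolvent set and $\|(\lambda I-A)^{-1}\|_{X,X}\le(\lambda-\omega)^{-1}$ throughout, the Hille--Yosida theorem delivers that $A$ generates a $C_0$ semigroup of $G(1,\omega)$ class, closing the equivalence.
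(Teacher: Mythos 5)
Your proposal is correct, but there is nothing in the paper to compare it against line by line: the paper states Theorem \ref{thm:LPtheorem} as a cited classical result from Lumer--Phillips and does not reprove it; the only in-paper content is Remark 1, which handles the passage from the $G(1,0)$ class to the $G(1,\omega)$ class by the rescaling $T(t)=e^{-\omega t}S(t)$, $\tilde A=A-\omega I$. Your route is the standard Pazy-style proof carried out directly at level $\omega$: for $(1)\Rightarrow(2)$ the estimate $\operatorname{Re}\langle S(t)x-x,x^\ast\rangle_{X,X^\ast}\le(e^{\omega t}-1)\|x\|_X^2$ followed by $t\downarrow0$, and for $(2)\Rightarrow(1)$ the conversion of dissipativity into the lower bound $\|(\lambda I-A)x\|_X\ge(\lambda-\omega)\|x\|_X$, the Neumann-series propagation of surjectivity from the single point $\lambda_0$ to all of $(\omega,\infty)$ (your doubling of the distance to $\omega$ is exactly the usual openness-plus-connectedness step made explicit), and an appeal to Hille--Yosida, where the $M=1$ case lets the single resolvent bound control all powers by submultiplicativity. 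All steps check out; in particular you correctly use $\|x^\ast\|_{X^\ast}=\|x\|_X$ from the duality set \eqref{eqn:dualityset} in the lower-bound estimate, and you only need the one functional $x^\ast\in F(x)$ furnished by the dissipativity hypothesis. The only stylistic difference worth noting is that the paper's intended reading (via the Remark) reduces everything to the classical $G(1,0)$ statement, which is shorter if one takes that theorem as known, whereas your argument is self-contained for general $\omega\ge0$.
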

\begin{remark}
The original Lumer-Phillips theorem is given for the $C_0$ semigroup of $G(1,0)$ class.
It is easy to extend the result for the semigroup of $G(1,\omega)$ class (cf. \cite{bib:pazy}).
Indeed, letting $S(t)$ be the $C_0$ semigroup of $G(1,\omega)$ class and $T(t)=e^{-\omega t}S(t)$, $T(t)$ is obviously the $C_0$ semigroup of $G(1,0)$ class.
Furthermore, if $A$ is the infinitesimal generator of $S(t)$ then $A - \omega I$ is the infinitesimal generator of $T(t)$.
On the other hand, if $\tilde{A}$ is the infinitesimal generator of $T(t)$, then $\tilde{A}+\omega I$ is the infinitesimal generator of $S(t)$ satisfying $\|S(t)\|_{X,X}\le e^{\omega t}$.
Thus, $S(t) = e^{\omega t}T(t)$.
It means that the original Lumer-Phillips theorem holds when one consider $\tilde{A}=A-\omega I$ instead of $A$.
\end{remark}

\subsection{Generation of $C_0$ semigroup on sequence spaces}\label{sec:generation_sg}
Let $\mathbb{C}$ be a set of complex numbers.
We define
\[
	\ell^p:=\left\{(a_k)_{k\in\mathbb{Z}}:\sum_{k\in\mathbb{Z}}|a_k|^p<\infty,~a_k\in\mathbb{C}\right\}\quad(1\le p<\infty)
\]
endowed with norms
\[
	\|a\|_p:=\left(\sum_{k\in\mathbb{Z}}|a_k|^p\right)^{1/p}\quad(1\le p<\infty).
\]
In the rest of the paper, we set the Banach space $X=\ell^2$ and its dual space $X^\ast=\ell^2\left(=\left(\ell^2\right)^\ast\right)$.
Let us define the multiply operator $B$ as
\[
	Ba:=(ika_k)_{k\in\mathbb{Z}}.
\]
The domain of $B$ is denoted by $D(B):=\left\{a\in X:Ba\in X\right\}$.
	The domain $D(B)$ is dense in $X$ because, for a sequence $a\in D(B)$, the finite-dimensional subspace defined by
	\[
	\tilde{X}_N:=\left\{(\tilde{a}_k)_{k\in\mathbb{Z}}:\tilde{a}_k=a_k~(|k|\le N),~\tilde{a}_k=0~(|k|>N)\right\}\subset D(B)
	\]
	is dense in $X$, i.e., for any $\varepsilon>0$ there exists a positive integer $N$ such that one can obtain $\tilde{a}\in\tilde{X}_n$ ($n\ge N$) satisfying $\|a-\tilde{a}\|_X<\varepsilon$.
Let $c=(c_k)_{k\in\mathbb{Z}}$ be a sequence of complex numbers and let us assume $Bc\in \ell^1$.
As defined in \eqref{eqn:opA}, the operator $A$ is denoted by the discrete convolution\footnote{%
	The "$*$" denotes the discrete convolution (Cauchy product) defined by $a*b=\left(\sum_{m\in\mathbb{Z}}a_{k-m}b_m\right)_{k\in\mathbb{Z}}$ for two bi-infinite sequences $a$ and $b$. 
} of $c$ and $Ba$, i.e.,
\begin{align*}
	Aa=-c*(Ba)=-\left(\sum_{m\in\mathbb{Z}}c_{k-m}ima_m\right)_{k\in\mathbb{Z}}.
\end{align*}
The domain of $A$ is denoted by $D(A):=\left\{a\in X:c*a\in D(B)\right\}$.
For the operator $A$ and its domain $D(A)$, we have the following two lemmas:
\begin{lemma}\label{lem:dense}
$D(A)$ is dense in $X$.
\end{lemma}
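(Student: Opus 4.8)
The plan is to exhibit a dense subspace of $X$ that is contained in $D(A)$. The natural candidate is $D(B)$ itself, which the preceding paragraph has already shown to be dense in $X=\ell^2$; so it suffices to establish the inclusion $D(B)\subseteq D(A)$.

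First I would record a consequence of the standing hypothesis $Bc\in\ell^1$. Since $\sum_{k\in\mathbb{Z}}|k|\,|c_k|<\infty$, we have $\sum_{k\neq0}|c_k|\le\sum_{k\neq0}|k|\,|c_k|<\infty$, and together with $|c_0|<\infty$ this gives $c\in\ell^1$ (in particular $c\in\ell^2$ and $c\in D(B)$). This is exactly what places every convolution below under Young's inequality $\|x*y\|_2\le\|x\|_1\|y\|_2$.

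The heart of the argument is a commutator identity relating the multiplication operator $B$ and the convolution $c*(\cdot)$. For $a\in D(B)$ the sequence $c*a$ already lies in $\ell^2$ by Young's inequality, and computing entrywise,
\[
(B(c*a))_k=ik\sum_{m\in\mathbb{Z}}c_{k-m}a_m=\sum_{m\in\mathbb{Z}}\bigl(i(k-m)+im\bigr)c_{k-m}a_m=((Bc)*a)_k+(c*(Ba))_k,
\]
where splitting the sum is justified because each resulting series converges absolutely, being an $\ell^1*\ell^2$ convolution (using $Bc\in\ell^1$, $a\in\ell^2$ for the first term and $c\in\ell^1$, $Ba\in\ell^2$ for the second). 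Hence $B(c*a)=(Bc)*a+c*(Ba)$, and a further application of Young's inequality gives
\[
\|B(c*a)\|_2\le\|Bc\|_1\|a\|_2+\|c\|_1\|Ba\|_2<\infty.
\]
Therefore $c*a\in D(B)$, which is precisely the condition defining membership in $D(A)$, so $a\in D(A)$ and the inclusion $D(B)\subseteq D(A)$ follows.

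Combining this inclusion with the density of $D(B)$ in $X$ immediately yields the claim. I expect the only delicate point to be the justification of the termwise rearrangement in the displayed identity; everything else is a direct appeal to Young's inequality, and the rearrangement itself is underwritten by the absolute convergence that the hypothesis $Bc\in\ell^1$ provides.
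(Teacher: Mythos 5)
Your argument is correct and follows essentially the same route as the paper: proving the inclusion $D(B)\subseteq D(A)$ via the commutator identity $B(c*a)=(Bc)*a+c*(Ba)$ and two applications of Young's inequality. Your explicit observation that $Bc\in\ell^1$ forces $c\in\ell^1$ (which the paper uses implicitly when invoking $\|c\|_1$) and your justification of the termwise rearrangement are welcome refinements, but they do not change the substance of the proof.
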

\begin{proof}
It is sufficient to prove $D(B)\subset D(A)$.
For $a\in D(B)$, we have
\[
	k\left(c*a\right)_k=k\sum_{m\in\mathbb{Z}}c_{k-m}a_k=\sum_{m\in\mathbb{Z}}(k-m)c_{k-m}a_m+\sum_{m\in\mathbb{Z}}c_{k-m}ma_m,~k\in\mathbb{Z}.
\]
Then, Young's inequality yields 
\begin{align*}
	\left\| B\left(c*a\right)\right\|_X&\le \left\|Bc*a\right\|_X+\left\|c*Ba\right\|_X\\
	&\le\left\|Bc\right\|_1\left\|a\right\|_X+\left\|c\right\|_1\left\|Ba\right\|_X.
\end{align*}
This is bounded because $a\in D(B)$ and $Bc\in\ell^1$.
\end{proof}
\begin{lemma}\label{lem:closedness}
The operator $A:D(A)\subset X\to X$ is closed.
\end{lemma}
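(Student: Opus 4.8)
The plan is to prove that $A$ is a closed operator by verifying the standard sequential characterization of closedness directly: given a sequence $\{a^{(n)}\}_{n}\subset D(A)$ with $a^{(n)}\to a$ in $X$ and $Aa^{(n)}\to b$ in $X$, I must show that $a\in D(A)$ and $Aa=b$. Since $Aa=-c*(Ba)$, this amounts to showing that $a\in D(A)$ (that is, $c*a\in D(B)$) and that the limit is the right one. The convenient feature of working in the sequence space $X=\ell^2$ is that norm convergence implies coordinatewise convergence, so I would extract information componentwise and then reassemble.

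The key steps, in order, are as follows. First I would observe that $a^{(n)}\to a$ in $\ell^2$ forces $a^{(n)}_k\to a_k$ for each fixed $k\in\mathbb{Z}$, and likewise $(Aa^{(n)})_k\to b_k$ for each $k$. Second, I would write out the $k$-th component of $Aa^{(n)}$ explicitly using the convolution formula,
\begin{equation*}
  \left(Aa^{(n)}\right)_k=-\sum_{m\in\mathbb{Z}}c_{k-m}\,im\,a^{(n)}_m,
\end{equation*}
and argue that, because $Bc\in\ell^1$ (so in particular the tails of $c$ decay fast enough) and $a^{(n)}\to a$ in $\ell^2$, one may pass to the limit inside this sum for each fixed $k$; the justification is that the map $a\mapsto \left(c*(Ba)\right)_k$ is a bounded linear functional on the relevant space, which follows from the Young-type estimate already used in Lemma \ref{lem:dense}, namely $\|c*(Ba)\|_X\le\|c\|_1\|Ba\|_X$ together with $\|Bc*a\|_X\le\|Bc\|_1\|a\|_X$. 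This yields $(Aa)_k=b_k$ coordinatewise, i.e. $-\sum_m c_{k-m}im\,a_m=b_k$ for every $k$. Third, since $b\in X=\ell^2$ by hypothesis and $b_k$ equals the $k$-th coordinate of $-B(c*a)$, I conclude $B(c*a)=-b\in\ell^2$, which is exactly the statement that $c*a\in D(B)$, hence $a\in D(A)$, and moreover $Aa=-c*(Ba)=b$.

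The main obstacle I anticipate is the rigorous justification of interchanging the limit $n\to\infty$ with the infinite sum over $m$ in the convolution, and of identifying the coordinatewise limit $b_k$ with the $k$-th coordinate of $B(c*a)$ rather than merely a formal expression. The delicate point is that $Ba^{(n)}$ need not converge in $X$, only $a^{(n)}$ and $Aa^{(n)}=-c*(Ba^{(n)})$ do; so I cannot simply commute $B$ with the limit. The way around this is to split $k(c*a^{(n)})_k$ using the algebraic identity from Lemma \ref{lem:dense}, $k(c*a^{(n)})_k=(Bc*a^{(n)})_k+(c*Ba^{(n)})_k$, and to control the first term via $\|Bc*a^{(n)}\|_X\le\|Bc\|_1\|a^{(n)}\|_X$ using only the convergence of $a^{(n)}$ in $X$ and the assumption $Bc\in\ell^1$. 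This reduces everything to estimates that depend on $a^{(n)}\to a$ and $Aa^{(n)}\to b$ in $X$ alone, without requiring separate convergence of $Ba^{(n)}$. Once this decomposition is in place, the remaining arguments are routine applications of the boundedness of the $\ell^1$-convolution on $\ell^2$ and the completeness of $\ell^2$.
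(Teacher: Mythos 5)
Your proof is correct and, once the decomposition in your closing paragraph is carried out, it is essentially the paper's own argument: the paper uses the same identity $B(c*a^n)=(Bc)*a^n-Aa^n$, passes to the limit via $\|(Bc)*(a^n-a)\|_X\le\|Bc\|_1\|a^n-a\|_X$ together with $Aa^n\to b$ and $c*a^n\to c*a$, and concludes from the closedness of $B$ --- which your coordinatewise-limit argument proves in place rather than citing. One slip to correct in your third step: $b_k$ is the $k$-th coordinate of $-c*(Ba)$, not of $-B(c*a)$; the correct identity is $\left(B(c*a)\right)_k=\left((Bc)*a\right)_k-b_k$, and since $(Bc)*a\in\ell^2$ by Young's inequality and $b\in\ell^2$, this still yields $c*a\in D(B)$, hence $a\in D(A)$ and $Aa=b$.
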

\begin{proof}
Consider the sequence $\{a^n\}\subset D(A)$ satisfying $a^n\to a$ and $Aa^n\to b$ in $X$ (as $n\to\infty$).
We have $c*a^n\to c*a$ in $X$ because
\[
	\left\|c*a^n-c*a\right\|_X=\left\|c*(a^n-a)\right\|_X\le\|c\|_1\|a^n-a\|_X\to 0~(n\to\infty).
\]
From the elementary calculation,
\[
	B\left(c*\phi\right)=(Bc)*\phi+c*\left(B\phi\right)=(Bc)*\phi-A\phi
\]
holds for $\phi\in D(A)$.
Then, we have
\[
	B\left(c*a^n\right)=(Bc)*a^n-Aa^n\to (Bc)*a-b~\mbox{in}~X.
\]
The closedness of $B$ implies that $c*a\in D(B)$ and $B(c*a)=(Bc)*a-b$.
This yields $a\in D(A)$ and $Aa=b$.
\end{proof}

The above two lemmas show that the operator $A$ is densely defined on $X$.
The following theorem gives a sufficient condition for the operator $A$ generating the $C_0$ semigroup of $G(1,\omega)$ class:
\begin{theorem}\label{thm:sg_generate}
Let $Bc\in\ell^1$ and $c_{-m}=\overline{c_m}$ $(m\in\mathbb{Z})$, where $\overline{c_m}$ denotes the complex conjugate of $c_m$.
If the sequence $c$ also satisfies
\begin{align}\label{eqn:ass_c}
	|c_0|-2\sum_{m\not=0}|c_m|>0,
\end{align}
then the operator $A$ generates the $C_0$ semigroup of $G(1,\omega)$ class on $X$.
Such a semigroup is denoted by $\{S(t)\}_{t\ge0}$ and satisfies
\begin{align}\label{eqn:sg_estimate}
	\|S(t)\|_{X,X}\le e^{\omega t}~\mbox{with}~\omega:=\frac{1}{2}\left\|Bc\right\|_1,~~t\ge0.
\end{align}
\end{theorem}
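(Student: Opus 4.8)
The plan is to invoke the Lumer--Phillips theorem (Theorem \ref{thm:LPtheorem}). Lemmas \ref{lem:dense} and \ref{lem:closedness} already supply that $A$ is closed and densely defined, so only two things remain: that $A$ is $\omega$-dissipative with $\omega=\tfrac12\|Bc\|_1$, and that the range condition $R(\lambda_0 I-A)=X$ holds for some $\lambda_0>\omega$.

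For dissipativity I would exploit that $X=\ell^2$ is a Hilbert space, so for $a\in D(A)$ the Riesz representative $a^\ast=a$ lies in $F(a)$ and $\langle Aa,a^\ast\rangle_{X,X^\ast}=(Aa,a)_{\ell^2}$. Writing $(Aa,a)_{\ell^2}=-i\sum_{k,m}c_{k-m}\,m\,a_m\overline{a_k}$ and reindexing by $j=k-m$, the diagonal term $j=0$ equals $-ic_0\sum_m m|a_m|^2$, which is purely imaginary since $c_0=\overline{c_0}$ is real and hence drops from the real part. Pairing the $j$ and $-j$ contributions and using the Hermitian symmetry $c_{-j}=\overline{c_j}$, the surviving real part collapses to $\operatorname{Re}(Aa,a)_{\ell^2}=\sum_{j>0}\operatorname{Re}\!\big(ij\,c_j\textstyle\sum_m a_m\overline{a_{m+j}}\big)$. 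Cauchy--Schwarz bounds each inner sum by $\|a\|_X^2$, yielding $\operatorname{Re}\langle Aa,a^\ast\rangle_{X,X^\ast}\le\big(\sum_{j>0}|j\,c_j|\big)\|a\|_X^2=\tfrac12\|Bc\|_1\|a\|_X^2$, i.e.\ $\omega$-dissipativity; notice this uses only the symmetry of $c$, not \eqref{eqn:ass_c}.

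The main obstacle is the range condition, where the unboundedness of $B$ must be absorbed, and this is where I would use \eqref{eqn:ass_c}, which in particular forces $c_0\neq0$ and $\sum_{m\neq0}|c_m|<|c_0|$. Write the constant-coefficient part as $A_0 a:=-c_0 Ba$ and set $\tilde c:=(c_m)_{m\neq0}$ (zero at $m=0$), so that $Aa=A_0 a-\tilde c*(Ba)$ and $\lambda_0 I-A_0$ is multiplication by $\lambda_0+ic_0 k$. For $\lambda_0>\omega\ge0$ this is boundedly invertible, and crucially $B(\lambda_0 I-A_0)^{-1}$ is bounded on $X$ with norm at most $1/|c_0|$, because $|k|/|\lambda_0+ic_0 k|\le 1/|c_0|$. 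Substituting $z:=(\lambda_0 I-A_0)a$ recasts $(\lambda_0 I-A)a=b$ as $(I+M)z=b$ with $Mz:=\tilde c*\big(B(\lambda_0 I-A_0)^{-1}z\big)$. By Young's inequality $M$ is \emph{bounded} on $X$ with $\|M\|\le\|\tilde c\|_1/|c_0|=\sum_{m\neq0}|c_m|/|c_0|<1$, so $I+M$ is invertible by a Neumann series and $z=(I+M)^{-1}b$ exists for every $b\in X$. Setting $a:=(\lambda_0 I-A_0)^{-1}z\in D(B)$ recovers a solution of $(\lambda_0 I-A)a=b$, and $a\in D(A)$ follows from the identity $B(c*a)=(Bc)*a+c*(Ba)$ from Lemma \ref{lem:closedness}, whose right-hand side lies in $X$ since $Bc\in\ell^1$ and $Ba\in X$.

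With both hypotheses of Theorem \ref{thm:LPtheorem} verified, $A$ generates a $C_0$ semigroup $\{S(t)\}_{t\ge0}$ of $G(1,\omega)$ class with $\omega=\tfrac12\|Bc\|_1$, whence $\|S(t)\|_{X,X}\le e^{\omega t}$. The delicate point is the range condition: the device of absorbing the unbounded multiplier $B$ into the resolvent of $A_0$ is what turns an unbounded equation into a bounded Neumann-series problem, and it is precisely the nonvanishing of $c_0$ guaranteed by \eqref{eqn:ass_c} that makes this possible.
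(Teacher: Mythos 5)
Your proposal is correct, and while the dissipativity half coincides with the paper's argument, your treatment of the range condition is genuinely different. For dissipativity you and the paper do the same thing in different coordinates: the paper splits $im=\tfrac{im}{2}-\tfrac{i}{2}(k-m)+\tfrac{ik}{2}$ and shows the symmetric piece $\sum c_{k-m}(ma_m\overline{a_k}+ka_m\overline{a_k})$ is purely imaginary via $c_{-j}=\overline{c_j}$, which is exactly your pairing of the $j$ and $-j$ diagonals after reindexing; both land on $\operatorname{Re}\langle Aa,a\rangle\le\tfrac12\|Bc\|_1\|a\|_X^2$. For the range condition the paper argues by contradiction: it shows $N(\lambda_0I-A)=\{0\}$ and closedness of the range, then supposes a nonzero annihilator $b$ of $R(\lambda_0I-A)$ exists and kills it using the invertibility of the convolution operator $C:a\mapsto c*a$ (Lemma \ref{lem:C_inv}) together with an estimate on $\|B^\ast b\|_{X^\ast}$; it is in the final inequality $\{(1-2\kappa)\lambda_0-\|Bc\|_1\}\|b\|_{X^\ast}\le 0$ that the factor $2$ in \eqref{eqn:ass_c} is consumed. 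You instead build the resolvent directly: splitting off the diagonal part $A_0=-c_0B$, observing that $B(\lambda_0I-A_0)^{-1}$ is bounded by $1/|c_0|$, and solving $(I+M)z=b$ by a Neumann series with $\|M\|\le\sum_{m\neq0}|c_m|/|c_0|<1$. This is constructive, avoids Lemma \ref{lem:C_inv} and the adjoint/closed-range machinery entirely, and in fact only uses the weaker diagonal dominance $|c_0|-\sum_{m\neq0}|c_m|>0$, so you prove generation under a strictly weaker hypothesis than \eqref{eqn:ass_c} (the full condition is of course still needed wherever the paper's quantitative bound on $C^{-1}$ is reused elsewhere). The paper's duality route is softer and does not rely on the diagonal part being explicitly invertible; yours buys an explicit resolvent bound and a sharper sufficient condition. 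Your closing step, deducing $a\in D(A)$ from $a\in D(B)$ via $B(c*a)=(Bc)*a+c*(Ba)$, is exactly the inclusion $D(B)\subset D(A)$ established in Lemma \ref{lem:dense}, so no gap there.
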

Before proving Theorem \ref{thm:sg_generate}, the following lemma plays an important role:
\begin{lemma}\label{lem:C_inv}
The operator $C:a\in X\mapsto c*a$ has a dense range and is invertible under the following assumptions:
\[
	c\in\ell^1,\quad
	|c_0|-\sum_{m\not=0}|c_m|>0.
\]
Moreover, the operator norm of $C^{-1}$ is estimated by
\[
	\left\|C^{-1}\right\|_{X,X}\le\frac{|c_0|^{-1}}{1-\frac{\sum_{m\not=0}|c_m|}{|c_0|}}.
\]
\end{lemma}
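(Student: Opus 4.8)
The plan is to treat $C$ as a bounded perturbation of the invertible scalar operator $c_0 I$ and to invoke the Neumann series. First I would record that $C$ is a bounded operator on $X=\ell^2$: by Young's inequality $\|Ca\|_X=\|c*a\|_2\le\|c\|_1\|a\|_X$, which is finite since $c\in\ell^1$. This is what makes the whole operator-theoretic argument available.

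Next I would split off the diagonal part of the convolution. Writing $c'=(c'_k)_{k\in\mathbb{Z}}$ with $c'_0=0$ and $c'_m=c_m$ for $m\neq 0$, the convolution decomposes as $Ca=c_0 a+c'*a$, so that $C=c_0 I+C'$ with $C'a:=c'*a$. Again by Young's inequality, $\|C'\|_{X,X}\le\|c'\|_1=\sum_{m\neq 0}|c_m|$. Since the hypothesis $|c_0|-\sum_{m\neq0}|c_m|>0$ forces $c_0\neq 0$, I may factor out the scalar and write $C=c_0\bigl(I+c_0^{-1}C'\bigr)$.

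The key step is that the same hypothesis produces a strict contraction: $\|c_0^{-1}C'\|_{X,X}\le|c_0|^{-1}\sum_{m\neq0}|c_m|<1$. Consequently $I+c_0^{-1}C'$ is invertible on all of $X$ through the Neumann series $\sum_{n\ge0}(-c_0^{-1}C')^n$, with $\bigl\|(I+c_0^{-1}C')^{-1}\bigr\|_{X,X}\le\bigl(1-|c_0|^{-1}\sum_{m\neq0}|c_m|\bigr)^{-1}$. Hence $C^{-1}=(I+c_0^{-1}C')^{-1}c_0^{-1}$ exists as a bounded operator on $X$, and multiplying the two norm bounds gives precisely the claimed estimate for $\|C^{-1}\|_{X,X}$. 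Because $C$ is then surjective onto $X$, its range equals $X$ and is in particular dense, so the dense-range assertion follows at once.

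I do not expect a serious obstacle; this is a textbook contraction/Neumann-series estimate. The only points needing care are the correct use of Young's inequality with indices $(1,2,2)$ to guarantee boundedness of both $C$ and $C'$ on $\ell^2$, and the observation that the \emph{strict} inequality in the hypothesis is exactly what pins the contraction constant below $1$, ensuring convergence of the geometric series and yielding the displayed bound.
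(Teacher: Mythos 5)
Your proposal is correct and follows essentially the same route as the paper: both factor out $c_0$ and observe via Young's inequality that $\left\|I-c_0^{-1}C\right\|_{X,X}\le\frac{\sum_{m\not=0}|c_m|}{|c_0|}<1$, so the Neumann series gives invertibility and the stated bound on $\left\|C^{-1}\right\|_{X,X}$. The only difference is that the paper proves density of the range by a separate duality argument (assuming a nonzero annihilating functional $d$ and testing with $a=d$), whereas you obtain it for free from surjectivity --- which is logically cleaner, since the Neumann-series step already yields $R(C)=X$.
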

\begin{proof}
The operator $C$ is the linear bounded operator.
First, we consider the density of the range of $C$, say $R(C)$.
Assuming $R(C)\not=X$, there exists a nonzero $d\in X^*$ such that $\langle Ca,d\rangle_{X,X^*}=\sum_{k\in\mathbb{Z}}(c*a)_k\overline{d_k}=0$ for any $a\in X$.
Taking $a=d$, it follows
\begin{align*}
	0&=\sum_{k\in\mathbb{Z}}\left(c*d\right)_k\overline{d_k}\\
	&=\sum_{k\in\mathbb{Z}}\left(c_0d_k+\sum_{m\neq k}c_{k-m}d_m\right)\overline{d_k}\\
	&\ge c_0\|d\|_{X^\ast}^2-\left(\sum_{m\not=0}|c_m|\right)\|d\|_{X^\ast}^2=\left(|c_0|-\sum_{m\not=0}|c_m|\right)\|d\|_{X^\ast}^2.
\end{align*}
This implies $d=0$ if $|c_0|-\sum_{m\not=0}|c_m|>0$.
Then, it contradicts the assumption $d\neq 0$ and $R(C)=X$ holds.

Next, we consider the invertibility of the operator $C$.
For $a\in X$, we have
\[
	\left(a-c_0^{-1}c*a\right)_k=a_k-c_0^{-1}\sum_{m\in\mathbb{Z}}c_{k-m}a_m=-c_0^{-1}\sum_{m\neq k}c_{k-m}a_m.
\]
Taking the norm of this, Young's inequality yields
{\small\begin{align}\label{eqn:I-C_es}
	\left\|\left(I-c_0^{-1}C\right)a\right\|_X=\left(\sum_{k\in\mathbb{Z}}\left|\left(a-c_0^{-1}c*a\right)_k\right|^2\right)^{1/2}=\left(\sum_{k\in\mathbb{Z}}\left|c_0^{-1}\sum_{m\neq k}c_{k-m}a_m\right|^2\right)^{1/2}\le\frac{\sum_{m\not=0}|c_m|}{|c_0|}\|a\|_X<\|a\|_X.
\end{align}}
This implies $\left\|I-c_0^{-1}C\right\|_{X,X}<1$.
Then, the operator $c_0^{-1}C$ is invertible because its Neumann series converges in the operator norm on $X$.
Finally, from \eqref{eqn:I-C_es}, the operator norm of $C^{-1}$ is bounded by
\[
	\left\|C^{-1}\right\|_{X,X}\le\frac{|c_0|^{-1}}{1-\left\|I-c_0^{-1}C\right\|_{X,X}}\le\frac{|c_0|^{-1}}{1-\frac{\sum_{m\not=0}|c_m|}{|c_0|}}.
\]
This completes the proof.
\end{proof}
\begin{proof}[Proof of Theorem \ref{thm:sg_generate}]
First, from Lemma \ref{lem:dense} and \ref{lem:closedness}, the operator $A$ is the closed linear operator with dense domain $D(A)$ in $X$.
Next, we will prove that the operator $A$ is $\omega$-dissipative.
For $a\in D(A)$, let us define the element of the duality set $v\in F(a)$ as the sequence $a$ itself.
Thus, it sees that
the dual product between $a$ and $v$ is
\begin{align*}
\langle a,v\rangle_{X,X^\ast}=\sum_{k\in\mathbb{Z}}a_k\overline{v_k}=\|a\|_X^2.
\end{align*}

\par
By using the sequence $v$, it follows for any $a\in D(A)$
\begin{align}\label{eqn:dproductAav}
\langle Aa,v\rangle_{X,X^\ast}&=-\sum_{k\in\mathbb{Z}}\left(\sum_{m\in\mathbb{Z}}c_{k-m}ima_m\right)\overline{v_k}\\\nonumber
&=-\sum_{k,m\in\mathbb{Z}}c_{k-m}\left(\frac{im}{2}-\frac{i}{2}(k-m)+\frac{ik}{2}\right)a_m\overline{v_k}\\\nonumber
&=-\frac{1}{2}\sum_{k,m\in\mathbb{Z}}\left(c_{k-m}(im)a_m\overline{v_k}-i(k-m)c_{k-m}a_m\overline{v_k}+ikc_{k-m}a_m\overline{v_k}\right)\\\nonumber
&=\frac{i}{2}\sum_{k,m\in\mathbb{Z}}(k-m)c_{k-m}a_m\overline{v_k}-\frac{i}{2}\sum_{k,m\in\mathbb{Z}}c_{k-m}\left(ma_m\overline{v_k}+a_mk\overline{v_k}\right)\\\nonumber
&=\frac{i}{2}\sum_{k,m\in\mathbb{Z}}(k-m)c_{k-m}a_m\overline{v_k}-\frac{i}{2}\sum_{k,m\in\mathbb{Z}}c_{k-m}\left(ma_m\overline{a_k}+a_mk\overline{a_k}\right).
\end{align}
Here, we set $d_{k,m}=ma_m\overline{a_k}+a_mk\overline{a_k}$.
By inverting the indexes of $d_{k,m}$, we observe that
\[
	d_{m,k}=ka_k\overline{a_m}+a_km\overline{a_m}=\overline{ma_m\overline{a_k}+a_mk\overline{a_k}}=\overline{d_{k,m}}.
\]
The second term in the last line of \eqref{eqn:dproductAav} follows
\begin{align}\label{eqn:impart}
\frac{i}{2}\sum_{k,m\in\mathbb{Z}}c_{k-m}d_{k,m}
&=\frac{i}{2}\sum_{k,m\in\mathbb{Z}}\left(\frac{1}{2}c_{k-m}d_{k,m}+\frac{1}{2}c_{m-k}d_{m,k}\right)\\\nonumber
&=\frac{i}{2}\sum_{k,m\in\mathbb{Z}}\left(\frac{1}{2}c_{k-m}d_{k,m}+\frac{1}{2}c_{-(k-m)}d_{m,k}\right)\\\nonumber
&=\frac{i}{2}\sum_{k,m\in\mathbb{Z}}\left(\frac{1}{2}c_{k-m}d_{k,m}+\frac{1}{2}\overline{c_{k-m}}\overline{d_{k,m}}\right)\in i\mathbb{R},
\end{align}
where $i\mathbb{R}$ denotes a set of purely imaginary numbers and we used the assumption\footnote{The variable coefficient $c(x)$ is the real-valued function.} $c_{-k}=\overline{c_k}$.
Then, we have the following estimate from \eqref{eqn:dproductAav} and \eqref{eqn:impart}:
\begin{align}\label{eqn:odissipative}
\mathrm{Re}\langle Aa,v\rangle_{X,X^\ast}&\le\frac{1}{2}\left|\sum_{k,m\in\mathbb{Z}}i(k-m)c_{k-m}a_m\overline{v_k}\right|\\\nonumber
&=\frac{1}{2}\left|\langle(Bc)*a,v\rangle_{X,X^\ast}\right|\\\nonumber
&\le\frac{1}{2}\left\|Bc\right\|_1\|a\|_X^2.
\end{align}
Taking $\omega=\frac{1}{2}\left\|Bc\right\|_1$, the operator $A$ is $\omega$-dissipative.

\par
Moreover, when we take $\lambda_0$ satisfying $\lambda_0>\omega$, from \eqref{eqn:odissipative} it follows for any $a\in D(A)$ and $v\in F(a)$
\begin{align*}
\left|\langle(\lambda_0I-A)a,v\rangle_{X,X^\ast}\right|
&\ge\mathrm{Re}\left\langle(\lambda_0I-A)a,v\right\rangle_{X,X^\ast}\\\nonumber
&=\lambda_0\|a\|_{X}^2-\mathrm{Re}\langle Aa,v\rangle_{X,X^\ast}\\\nonumber
&\ge(\lambda_0-\omega)\|a\|_{X}^2.
\end{align*}
This implies that the null-space of $\lambda_0I-A$ has only the zero element, i.e., $N(\lambda_0I-A)=\{0\}$.
Then, $\lambda_0I-A$ is one-to-one and the range of $\lambda_0I-A$ is closed from the closed-range theorem (cf. \cite{bib:Brezis2010}).

\par
Finally, assuming that $R(\lambda_0I-A)\not=X$, there exists a nonzero element $b\in X^\ast$ such that
\begin{align}\label{eqn:la-Aa}
	\left\langle\lambda_0a-Aa,b\right\rangle_{X,X^\ast}=0,\quad \forall a\in D(A).
\end{align}
From the definition of $A$, it follows for $a\in D(A)$
\[
	Aa=-c*(Ba)=(Bc)*a-B\left(c*a\right).
\]
Then, \eqref{eqn:la-Aa} is equivalent to
\begin{align}\label{eqn:la-Aa_equiv}
	\left\langle\lambda_0a-(Bc)*a+B\left(c*a\right),b\right\rangle_{X,X^\ast}=0
	\iff\left\langle a,\lambda_0b-(Bc)*b\right\rangle_{X,X^\ast}=-\left\langle B\left(c*a\right),b\right\rangle_{X,X^\ast}.
\end{align}
By using Lemma \ref{lem:C_inv}, it follows from \eqref{eqn:la-Aa_equiv}
\begin{align}\label{eqn:B*b_estimate}
\left|\left\langle B\left(c*a\right),b\right\rangle_{X,X^\ast}\right|&=\left|\left\langle a,\lambda_0b-(Bc)*b\right\rangle_{X,X^\ast}\right|\\\nonumber
&=\left|\left\langle C^{-1}(c*a),\lambda_0b-(Bc)*b\right\rangle_{X,X^\ast}\right|\\\nonumber
&\le\left\|C^{-1}(c*a)\right\|_X\left\|\lambda_0b-(Bc)*b\right\|_{X^\ast}\\\nonumber
&\le\frac{|c_0|^{-1}\left\|c*a\right\|_X}{1-\frac{\sum_{m\not=0}|c_m|}{|c_0|}}\left\|\lambda_0b-(Bc)*b\right\|_{X^\ast}\\\nonumber
&\le\frac{|c_0|^{-1}}{1-\frac{\sum_{m\not=0}|c_m|}{|c_0|}}\left(\lambda_0+\|Bc\|_1\right)\|b\|_{X^\ast}\left\|c*a\right\|_X.
\end{align}
Since $\|Bc\|_1$ and $\|b\|_{X^\ast}$ are bounded, $b\in D(B^\ast)$ holds,
where $B^\ast$ denotes the adjoint of $B$.
We rewrite \eqref{eqn:la-Aa_equiv} as
\begin{align}\label{eqn:a_b_inp}
	\lambda_0\left\langle a,b\right\rangle_{X,X^\ast}&=\left\langle(Bc)*a,b\right\rangle_{X,X^\ast}-\left\langle B\left(c*a\right),b\right\rangle_{X,X^\ast}\\\nonumber
	&=\left\langle(Bc)*a,b\right\rangle_{X,X^\ast}-\left\langle c*a,B^\ast b\right\rangle_{X,X^\ast}.
\end{align}
Plugging $a=b$ in both sides of \eqref{eqn:a_b_inp}, the real part of \eqref{eqn:a_b_inp} yields
\begin{align}\label{eqn:|b_n|_es}
	\lambda_0\|b\|_{X^\ast}^2&=\mathrm{Re}\left(\left\langle(Bc)*b,b\right\rangle_{X,X^\ast}-\left\langle c*b,B^\ast b\right\rangle_{X,X^\ast}\right)\\\nonumber
	&=\mathrm{Re}\left(\sum_{k,m\in\mathbb{Z}}i(k-m)c_{k-m}b_m\overline{b_k}-\sum_{k,m\in\mathbb{Z}}c_{k-m}b_m\overline{(-ik)b_k}\right)\\\nonumber
	&\le\left|\sum_{k\neq m}\sum_{m\in\mathbb{Z}}i(k-m)c_{k-m}b_m\overline{b_k}-\sum_{k\neq m}\sum_{m\in\mathbb{Z}}c_{k-m}b_m\overline{(-ik)b_k}\right|\\\nonumber
	&\le\|b\|_{X^\ast}\left(\|Bc\|_1\|b\|_{X^\ast}+\sum_{m\not=0}|c_m|\left\|B^\ast b\right\|_{X^\ast}\right).
\end{align}
From \eqref{eqn:B*b_estimate}, we obtain an estimate of $\left\|B^\ast b\right\|_{X^\ast}$ 
\begin{align}\label{eqn:B*b_norm}
	\left\|B^\ast b\right\|_{X^\ast}=\sup_{0\neq x\in D(B)}\frac{\left\langle Bx,b\right\rangle_{X,X^\ast}}{\|x\|_X}\le\frac{|c_0|^{-1}}{1-\frac{\sum_{m\not=0}|c_m|}{|c_0|}}\left(\lambda_0+\|Bc\|_1\right)\|b\|_{X^\ast}.
\end{align}
Putting $\kappa:=\frac{\sum_{m\not=0}|c_m|}{|c_0|}$, it follows from \eqref{eqn:|b_n|_es} and \eqref{eqn:B*b_norm}
\begin{align*}
	\lambda_0\|b\|_{X^\ast}
	&\le\|Bc\|_1\|b\|_{X^\ast}+\frac{\kappa}{1-\kappa}\left(\lambda_0+\|Bc\|_1\right)\|b\|_{X^\ast}\\
	&=\left(\frac{\kappa}{1-\kappa}\lambda_0+\frac{1}{1-\kappa}\|Bc\|_1\right)\|b\|_{X^\ast}.
\end{align*}
This is equivalent to
\[
	\left\{(1-2\kappa)\lambda_0-\|Bc\|_1\right\}\|b\|_{X^\ast}\le0.
\]
The assumption \eqref{eqn:ass_c} yields $\|b\|_{X^\ast}=0$ if we take $\lambda_0$ satisfying 
\[
	\lambda_0>\frac{\|Bc\|_1}{1-2\kappa}(>\omega).
\]
This contradicts $b\neq0$ and $R(\lambda_0I-A)=X$ holds for such a $\lambda_0$.

\par
Therefore, from the Lumer-Phillips theorem (Theorem \ref{thm:LPtheorem}), the operator $A$ generates the $C_0$ semigroup of $G(1,\omega)$ class.
\end{proof}

\subsection{Bounded estimate of $C_0$ semigroup}
Since the behavior of the solution of \eqref{eqn:advec_Eq} follows characteristic curve, one may show that the solution becomes periodic in time.
In such a case, the estimate of the $C_0$ semigroup generated by $A$ is bounded (no exponential growth) by using the periodic property.

Let $T$ be a period such that $u(t+T)=u(t)$ for $t>0$.
If the solution of \eqref{eqn:advec_Eq} is a periodic solution of period $T$, $u(T,\cdot)=u_0$ holds, which implies $S(T)=I$.
Then, from the semigroup property (\textit{2.} in Definition \ref{def:semigroup}), we rewrite the estimate \eqref{eqn:sg_estimate} as
\begin{align}\label{eqn:sg_estimate_periodic}
	\|S(t)\|_{X,X}\le
	e^{\omega(t-nT)},~~nT\le t<(n+1)T,~n=0,1,2,\dots
\end{align}
This estimate implies that the $C_0$ semigroup generated by $A$ becomes the $C_0$ semigroup of $G(M,0)$ class, where the constant $M$ is independent of $t$.
More precisely, we have $M=e^{\omega T}$.
The estimate \eqref{eqn:sg_estimate_periodic} is effective to enclose the solution of \eqref{eqn:advec_Eq} for long time because the estimate is independent of $t$.

\section{Rigorous error estimate using $C_0$ semigroup}\label{sec3}
In this section, we will derive a rigorous error estimate between the exact solution and a numerically computed approximate solution, which is our main result for verified numerical computations.
For a positive integer $N$, let
\[
	\tilde{u}(x,t)=\sum_{|k|\le N}\tilde{a}_k(t)e^{ikx}
\]
be a numerically computed approximate solution of \eqref{eqn:advec_Eq} and let $\tilde{a}(t)$ be the {\em infinite-dimensional extension} of $(\tilde{a}_k(t))_{|k|\le N}$, i.e., $\tilde{a}(t):=(\dots,0,0,\tilde{a}_{-N}(t),\dots,\tilde{a}_{N}(t),0,0\dots)$.
We also define a sequence of functions $z(t)=(z_k(t))_{k\in\mathbb{Z}}$ as $z(t)=a(t)-\tilde{a}(t)$.
By using the sequence $z$, we rewrite \eqref{eqn:infODEs} as
\begin{align}\label{eqn:resODE}
\frac{d}{dt}z_k(t)+\left(c*(Bz(t))\right)_k=-\left\{\frac{d}{dt}\tilde{a}_k(t)+\left(c*(B\tilde{a}(t))\right)_k\right\},~k\in\mathbb{Z}.
\end{align}
In the sequence space $X$, \eqref{eqn:resODE} is expressed by
\begin{align}\label{eqn:resODEinX}
\frac{d}{dt}z(t)-Az(t)=-\left(\frac{d}{dt}\tilde{a}(t)-A\tilde{a}(t)\right)~\mbox{in}~X,
\end{align}
where $A$ is defined in \eqref{eqn:opA}.
Since the operator $A$ generates the $C_0$ semigroup $\{S(t)\}_{t\ge0}$ of $G(1,\omega)$ class discussed in Section \ref{sec:generation_sg}, the solution of \eqref{eqn:resODEinX} is given by
\begin{align}\label{eqn:z(t)_form}
z(t)=S(t)z(0)+\int_{0}^{t}S(t-s)r(s)ds,
\end{align}
where $r(s)$ is the residual of the approximate solution defined by
\[
	r(s):=-\left(\frac{d}{ds}\tilde{a}(s)-A\tilde{a}(s)\right).
\]
From \eqref{eqn:sg_estimate} and \eqref{eqn:z(t)_form}, we have the following estimate for each $t>0$ 
\begin{align}\label{eqn:rigorous_err}
\left\|z(t)\right\|_X&\le\left\|S(t)z(0)\right\|_X+\int_{0}^{t}\left\|S(t-s)r(s)\right\|_Xds\\\nonumber
&\le e^{\omega t}\|z(0)\|_X+\int_{0}^{t}e^{\omega (t-s)}\|r(s)\|_Xds\\\nonumber
&\le e^{\omega t}\|z(0)\|_X+\left(\frac{e^{\omega t}-1}{\omega}\right)\|r\|_{C((0,t);X)},
\end{align}
where $\omega=\frac{1}{2}\left\|Bc\right\|_1$ and $\|r\|_{C((0,t);X)}:=\sup_{s\in(0,t)}\|r(s)\|_X$.
The estimate \eqref{eqn:rigorous_err} presents an error estimate between the Fourier coefficients of the exact solution of \eqref{eqn:advec_Eq} and the given approximate solution.
This is a core result of our method of verified computing.
In actual computations, we derive estimates of $\|z(0)\|_X$ and $\|r\|_{C((0,t);X)}$ rigorously, which is discussed in the next subsection.

\begin{remark}
If the solution of \eqref{eqn:advec_Eq} is a periodic solution of period $T$, the rigorous error estimate \eqref{eqn:rigorous_err} is improved by using \eqref{eqn:sg_estimate_periodic}
\begin{align*}
\left\|z(t)\right\|_X\le
e^{\omega (t-nT)}\|z(0)\|_X+\left(\frac{e^{-\omega nT}}{\omega}\left(e^{\omega t}-1\right)\right)\|r\|_{C((0,t);X)}
\end{align*}
for $nT\le t<(n+1)T~(n=0,1,2,\dots)$.
\end{remark}

Finally, we have the following results, which show the existence locally in time of the solution of \eqref{eqn:advec_Eq} in the neighborhood of the numerically computed approximate solution.
The proof immediately follows from the above arguments.

\begin{theorem}\label{thm:main_thm}
For the advection equation \eqref{eqn:advec_Eq}, let $c(x)$ be a real-valued function and let the Fourier coefficients of $c(x)$ be $c=(c_k)_{k\in\mathbb{Z}}$.
If $\omega=\frac{1}{2}\left\|Bc\right\|_1$ is bounded and $|c_0|-2\sum_{m\not=0}|c_m|>0$ holds, then the time-dependent Fourier coefficients of the solution of \eqref{eqn:advec_Eq} exists in the function space $C((0,t_{\max});X)$ for $t_{\max}>0$.
The rigorous error estimate between the Fourier coefficients of the solution and its numerically computed approximate solution is given by
\begin{align*}
\left\|z\right\|_{C((0,t_{\max});X)}
&=\sup_{t\in(0,t_{\max})}\left\|z(t)\right\|_X\\
&\le e^{\omega t_{\max}}\|z(0)\|_X+\left(\frac{e^{\omega t_{\max}}-1}{\omega}\right)\|r\|_{C((0,t_{\max});X)}.
\end{align*}
\end{theorem}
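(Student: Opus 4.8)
The plan is to assemble the statement from two ingredients already at hand: the generation result Theorem~\ref{thm:sg_generate} and the variation-of-constants representation \eqref{eqn:z(t)_form} together with the chain of inequalities \eqref{eqn:rigorous_err}. I would begin by checking that the three hypotheses here are exactly those of Theorem~\ref{thm:sg_generate}: real-valuedness of $c(x)$ gives the symmetry $c_{-m}=\overline{c_m}$, boundedness of $\omega=\tfrac12\|Bc\|_1$ is the condition $Bc\in\ell^1$, and $|c_0|-2\sum_{m\neq0}|c_m|>0$ is precisely \eqref{eqn:ass_c}. Hence Theorem~\ref{thm:sg_generate} applies and furnishes a $C_0$ semigroup $\{S(t)\}_{t\ge0}$ of $G(1,\omega)$ class generated by $A$ with $\|S(t)\|_{X,X}\le e^{\omega t}$.

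For existence, I would note that $\tilde a(t)$ is the infinite-dimensional extension of a trigonometric-polynomial approximation smooth in $t$, so the residual $r(s)=-(\tfrac{d}{ds}\tilde a(s)-A\tilde a(s))$ is a finitely supported sequence depending continuously on $s$; thus $r\in C((0,t_{\max});X)$ and $z(0)=a(0)-\tilde a(0)\in X$. Standard semigroup theory for the inhomogeneous Cauchy problem \eqref{eqn:resODEinX} then guarantees that \eqref{eqn:z(t)_form} defines a (mild) solution $z\in C((0,t_{\max});X)$, whence $a(t)=z(t)+\tilde a(t)$ solves \eqref{eqn:infODEs} and lies in $C((0,t_{\max});X)$. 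This yields the asserted existence of the time-dependent Fourier coefficients.

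The error bound is then obtained by applying the triangle inequality to \eqref{eqn:z(t)_form}, inserting $\|S(t-s)\|_{X,X}\le e^{\omega(t-s)}$, evaluating $\int_0^t e^{\omega(t-s)}\,ds=(e^{\omega t}-1)/\omega$, and taking the supremum over $t\in(0,t_{\max})$ with monotonicity of $e^{\omega t}$ to replace $t$ by $t_{\max}$; this reproduces \eqref{eqn:rigorous_err}. I do not expect a genuine obstacle at this stage, since the delicate work---verifying $\omega$-dissipativity and the range condition $R(\lambda_0 I-A)=X$ and invoking Lumer--Phillips---is already completed in Theorem~\ref{thm:sg_generate}. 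The only point demanding care is to confirm that $r$ and $z(0)$ really belong to $X$ with the continuity needed for the Duhamel integral and its norm estimate to be legitimate, which is immediate from the finite support and smoothness of $\tilde a$.
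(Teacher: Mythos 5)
Your proposal is correct and follows essentially the same route as the paper, which simply declares that the theorem ``immediately follows'' from Theorem~\ref{thm:sg_generate}, the Duhamel representation \eqref{eqn:z(t)_form}, and the estimate \eqref{eqn:rigorous_err}; you have merely made the implicit steps explicit. One small imprecision: $r(s)$ need not be finitely supported when $c$ has infinitely many nonzero Fourier coefficients, but since $B\tilde a(s)$ is finitely supported and $c\in\ell^1$, Young's inequality still gives $r(s)=c*(B\tilde a(s))-\frac{d}{ds}\tilde a(s)\in X$ with the continuity in $s$ you need, so the argument stands.
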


\subsection{Rigorous estimation of initial error and residual}
\subsubsection{Approximate solution}
The approximate solution is given by the Fourier series in space variable and the Chebyshev series in time variable.
We derive the following system of complex-valued ODEs via the discrete Fourier transform of \eqref{eqn:advec_Eq}:
\begin{align}\label{eqn:discreteODE}
	\frac{d}{dt}a_k(t)+\sum_{|m|\le N}c_{k-m}ima_m(t)=0,~|k|\le N.
\end{align}
The Fourier coefficients of the initial function and those of the variable coefficient are expressed by
\[
	a_k(0)=\frac{1}{2\pi}\int_{0}^{2\pi}u_0(x)e^{-ikx}dx\quad\mbox{and}\quad c_k=\frac{1}{2\pi}\int_{0}^{2\pi}c(x)e^{-ikx}dx,
\]
respectively.
We integrate \eqref{eqn:discreteODE} by using the integrator so-called {\em ode45 in Chebfun} \cite{bib:Driscoll2014}.
The numerically computed approximate solution is expressed by
\begin{align*}
\tilde{u}(x,t)&=\sum\limits_{|k|\le N}\tilde{a}_k(t)e^{ikx}=\sum\limits_{|k|\le N}\left(\sum_{l=0}^n \tilde{a}_{k,l}T_l(t) \right)e^{ikx},
\end{align*}
where $T_l(t)$ denotes the $l$-th Chebyshev polynomial of the first kind with respect to $t$ and
$\tilde{a}_{k,l}\in\mathbb{C}$.
Furthermore, the first derivative of the approximate solution with respect to the time variable is expressed for $\tilde{a}_k(t)=\sum_{l=0}^n \tilde{a}_{k,l}T_l(t)$ by
\[
	\frac{d}{dt}\tilde{a}_k(t)=\sum_{l=0}^{n-1} \tilde{a}_{k,l}^{(1)}T_l(t),
\]
where $\tilde{a}_{k,l}^{(1)}\in\mathbb{C}$ can be computed by an recursive algorithm (see, e.g., \cite[page 34]{bib:mason2002}).

\subsubsection{Initial error estimate}
To derive an upper bound of $\|z(0)\|_X$, we have
\begin{align}\label{eqn:initial_err}
\|z(0)\|_X&=\left(\sum_{k\in\mathbb{Z}}\left|a_k(0)-\tilde{a}_k(0)\right|^2\right)^{1/2}\\\nonumber
&\le\left(\sum_{|k|\le N}\left|a_k(0)-\tilde{a}_k(0)\right|^2\right)^{1/2}+\left(\sum_{|k|> N}|a_k(0)|^2\right)^{1/2}.
\end{align}
The first term represents the numerical (rounding) error, which is rigorously enclosed by {\em interval arithmetic}.
The second one represents the truncation error, which is expected to be small in practice.
For each given initial function, we estimate the last term of \eqref{eqn:initial_err} in the next section.

\subsubsection{Residual estimate}\label{sec:residual}
Let us consider the residual term.
For a fixed $s>0$, we have
\begin{align}\label{eqn:residual_es1}
&\left\|\frac{d}{ds}\tilde{a}(s)-A\tilde{a}(s)\right\|_X\\\nonumber
&=\left(\sum_{k\in\mathbb{Z}}\left|\frac{d}{ds}\tilde{a}_k(s)+\sum_{m\in\mathbb{Z}}c_{k-m}im\tilde{a}_m(s)\right|^2\right)^{1/2}\\\nonumber
&\le\left(\sum_{|k|\le N}\left|\frac{d}{ds}\tilde{a}_k(s)+\sum_{m\in\mathbb{Z}}c_{k-m}im\tilde{a}_m(s)\right|^2\right)^{1/2}+\left(\sum_{|k|> N}\left|\sum_{m\in\mathbb{Z}}c_{k-m}im\tilde{a}_m(s)\right|^2\right)^{1/2}\\\nonumber
&=\left(\sum_{|k|\le N}\left|\sum_{l=0}^{n-1}\tilde{a}_{k,l}^{(1)}T_l(s)+\sum_{m\in\mathbb{Z}}c_{k-m}im\left(\sum_{l=0}^n\tilde{a}_{m,l}T_l(s)\right)\right|^2\right)^{1/2}+\left(\sum_{|k|> N}\left|\sum_{m\in\mathbb{Z}}c_{k-m}im\tilde{a}_m(s)\right|^2\right)^{1/2}\\\nonumber
&=\left(\sum_{|k|\le N}\left|\sum_{l=0}^{n-1}\left(\tilde{a}_{k,l}^{(1)}+\sum_{|m|\le N}c_{k-m}im\tilde{a}_{m,l}\right)T_l(s)+\sum_{|m|\le N}c_{k-m}im\tilde{a}_{m,n}T_n(s)\right|^2\right)^{1/2}\\\nonumber
&\hphantom{=}\quad+\left(\sum_{|k|> N}\left|\sum_{m\in\mathbb{Z}}c_{k-m}im\tilde{a}_m(s)\right|^2\right)^{1/2}\\\nonumber
&=\left(\sum_{|k|\le N}\left|\sum_{l=0}^{n}d_{k,l}T_l(s)\right|^2\right)^{1/2}+\left(\sum_{|k|> N}\left|\sum_{m\in\mathbb{Z}}c_{k-m}im\tilde{a}_m(s)\right|^2\right)^{1/2},\\\nonumber
\end{align}
where we denote
\[
	d_{k,l}:=\left\{\begin{array}{ll}
	\displaystyle\tilde{a}_{k,l}^{(1)}+\sum_{|m|\le N}c_{k-m}im\tilde{a}_{m,l}&(0\le l<n),\\[2mm]
	\displaystyle\sum_{|m|\le N}c_{k-m}im\tilde{a}_{m,n}&(l=n).
	\end{array}\right.
\]
The second term of the last line in \eqref{eqn:residual_es1} is estimated as follows:
let us divide the sequence $c=c^{(N)}+c^{(\infty)}$, where $c^{(N)}:=(\dots,0,c_{-N},\dots,c_{N},0,\dots)$ and $c^{(\infty)}:=(\dots,c_{-N-1},0,\dots,0,c_{N+1},\dots)$.
We have
{\small\begin{align*}
\left(\sum_{|k|> N}\left|\sum_{m\in\mathbb{Z}}c_{k-m}im\tilde{a}_m(s)\right|^2\right)^{1/2}
&=\left(\sum_{|k|> N}\left|\left(c*\left(B\tilde{a}(s)\right)\right)_k\right|^2\right)^{1/2}\\
&=\left(\sum_{|k|> N}\left|\left(\left(c^{(N)}+c^{(\infty)}\right)*\left(B\tilde{a}(s)\right)\right)_k\right|^2\right)^{1/2}\\
&\le\left(\sum_{|k|> N}\left|\left(c^{(N)}*\left(B\tilde{a}(s)\right)\right)_k\right|^2\right)^{1/2}+\left(\sum_{|k|> N}\left|\left(c^{(\infty)}*\left(B\tilde{a}(s)\right)\right)_k\right|^2\right)^{1/2}\\
&\le\left(\sum_{N<|k|\le 2N}\left|\sum_{\substack{k_1+k_2=k,\\|k_1|,|k_2|\le N}}c_{k_1}ik_2\tilde{a}_{k_2}(s)\right|^2\right)^{1/2}+\left\|c^{(\infty)}\right\|_1\left\|B\tilde{a}(s)\right\|_X.
\end{align*}}%
The last term is rigorously computable by using interval arithmetic and the truncated error estimate of the sequence $c$.
Finally, for some $t_{\max}>0$, we obtain an upper bound of the residual term
\begin{align}\label{eqn:residual_es}
	&\|r\|_{C((0,t_{\max});X)}\\\nonumber
	&=\sup_{s\in(0,t_{\max})}\left\|\frac{d}{ds}\tilde{a}(s)-A\tilde{a}(s)\right\|_X\\\nonumber
	&\le\sup_{s\in(0,t_{\max})}\left[\left(\sum_{|k|\le N}\left|\sum_{l=0}^{n}d_{k,l}T_l(s)\right|^2\right)^{1/2}+\left(\sum_{N<|k|\le 2N}\left|\sum_{\substack{k_1+k_2=k,\\|k_1|,|k_2|\le N}}c_{k_1}ik_2\sum_{l=0}^n\tilde{a}_{k_2,l}T_l(s)\right|^2\right)^{1/2}\right.\\\nonumber
	&\quad+\left.\left\|c^{(\infty)}\right\|_1\left(\sum_{|k|\le N}\left|k\sum_{l=0}^n\tilde{a}_{k,l}T_l(s)\right|^2\right)^{1/2}\right]\\\nonumber
	&\le\left(\sum_{|k|\le N}\left(\sum_{l=0}^{n}\left|d_{k,l}\right|\right)^2\right)^{1/2}+\left(\sum_{N<|k|\le 2N}\left(\sum_{l=0}^n\left|\gamma_{k,l}\right|\right)^2\right)^{1/2}+\left\|c^{(\infty)}\right\|_X\left(\sum_{|k|\le N}\left(\left|k\right|\sum_{l=0}^n\left|\tilde{a}_{k,l}\right|\right)^2\right)^{1/2},
\end{align}
where $\gamma_{k,l}$ is denoted by
\[
	\gamma_{k,l}=\sum_{\substack{k_1+k_2=k,\\|k_1|,|k_2|\le N}}c_{k_1}ik_2\tilde{a}_{k_2,l}.
\]
\section{Numerical results}\label{sec:num_result}
In this section, we show three examples to demonstrate the efficiency of our verified numerical computations.
All computations are carried out on Windows 10, Intel(R) Core(TM) i7-6700K CPU @ 4.00GHz, and MATLAB 2017a with INTLAB - INTerval LABoratory \cite{bib:intlab} version 10.2 and Chebfun - numerical computing with functions \cite{bib:Driscoll2014} version 5.6.0.
All codes used to produce the results in this section are freely available from \cite{bib:codes}.

\subsection{Example 1}
As the first example, we consider the initial-boundary value problem \eqref{eqn:advec_Eq} with
\[
	c(x)=0.51+\sin^2(x-1),\quad u_0(x)=\sum_{k\in\mathbb{Z}}a_k(0)e^{ikx},\quad a_k(0)=\frac{1}{20\sqrt{\pi}}e^{-\frac{k^2}{400}-ik}.
\]
This initial function is an analytic function satisfying $u_0(x)\approx e^{-100(x-1)^2}$.
Because the Fourier coefficients of $e^{-100(x-1)^2}$ are
{
\[
	\frac{1}{2\pi}\int_{0}^{2\pi}e^{-100(x-1)^2}e^{-ikx}dx=\frac{e^{-\frac{k^2}{400}-ik}}{40\sqrt{\pi}}\left(\operatorname{erf}\left(\frac{ik+400\pi-200}{20}\right)-\operatorname{erf}\left(\frac{ik-200}{20}\right)\right)~(k\in\mathbb{Z}),
\]}%
we approximate the term $\operatorname{erf}\left(\frac{ik+400\pi-200}{20}\right)-\operatorname{erf}\left(\frac{ik-200}{20}\right)$ as $2$, where $\operatorname{erf}(x)=\frac{2}{\sqrt{\pi}}\int_{0}^xe^{-s^2}ds$.
In \cite{bib:Trefethen2000}, the function $e^{-100(x-1)^2}$ is said that \textit{this function is not mathematically periodic, but it is so close to zero at the ends of the interval that it can be regarded as periodic in practice}.
Then, we set such an initial function that is rigorously periodic and is close to $e^{-100(x-1)^2}$.
The truncated error of the initial function in \eqref{eqn:initial_err} is estimated by
\begin{align}\label{eqn:truncate_ex1}
	\left(\sum_{|k|> N}|a_k(0)|^2\right)^{1/2}&=\frac{1}{20\sqrt{\pi}}\left(\sum_{|k|> N}\left|e^{-\frac{k^2}{400}}\right|^2\right)^{1/2}\\\nonumber
	&=\frac{1}{10\sqrt{2\pi}}\left(\sum_{k=N+1}^\infty\left|e^{-\frac{k^2}{400}}\right|^2\right)^{1/2}\\\nonumber
	&\le\frac{1}{10\sqrt{2\pi}}\left(\int_{N}^{\infty}e^{-\frac{x^2}{200}}dx\right)^{1/2}\\\nonumber
	&=\frac{1}{2}\left(\operatorname{erfc}\left(\frac{N}{10\sqrt{2}}\right)\right)^{1/2},
\end{align}
where $\operatorname{erfc}(x)=1-\operatorname{erf}(x)$.
Furthermore, the Fourier coefficients of $c(x)$ are given by
\[
	c_k=\begin{cases}
	-\frac{e^{2i}}{4},&k=-2,\\
	1.01,&k=0,\\
	-\frac{e^{-2i}}{4},&k=2,\\
	0,&\mbox{otherwise}.
	\end{cases}
\]
It is shown that the sequence $c=(c_k)_{k\in\mathbb{Z}}$ satisfies $Bc\in\ell^1$ and \eqref{eqn:ass_c}.
Theorem \ref{thm:sg_generate} follows that the operator $A$ defined in \eqref{eqn:opA} generates the $C_0$ semigroup of $G(1,\omega)$ class on the sequence space $X$ with
\[
	\omega=\frac{1}{2}\|Bc\|_1=\frac{1}{2}.
\]
The profile of the variable coefficient $c(x)$ and that of the initial function $u_0(x)$ are displayed in Figure \ref{fig:fig1}.
\begin{figure}[htbp]\em
\centering
\includegraphics[width=0.8\textwidth]{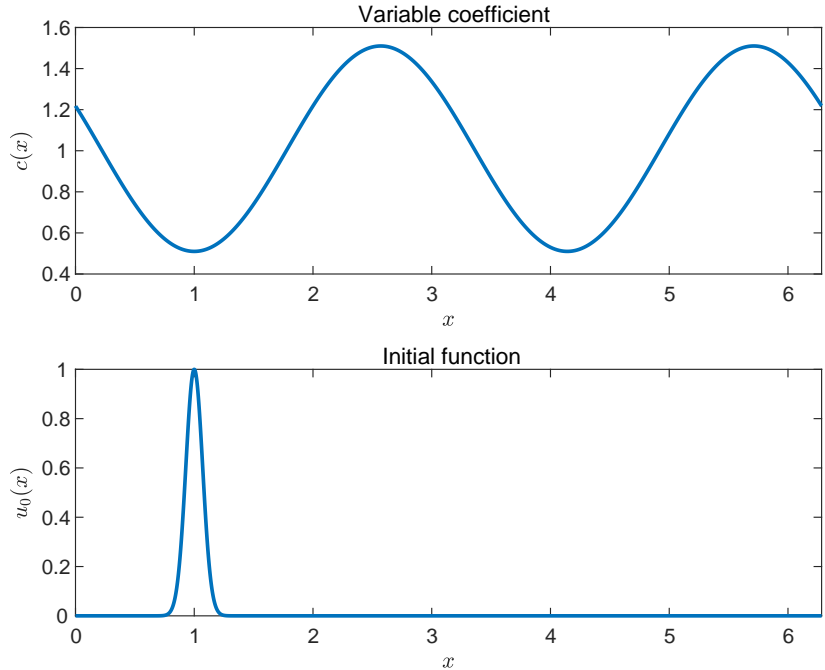}
\caption{Profile of the variable coefficient (upper) and that of the initial function (lower) in Example 1.}
\label{fig:fig1}
\end{figure}
In Figure \ref{fig:fig2}, we plot the behavior of the numerically computed approximate solution $\tilde{u}$.
\begin{figure}[htbp]\em
\centering
\includegraphics[width=0.8\textwidth]{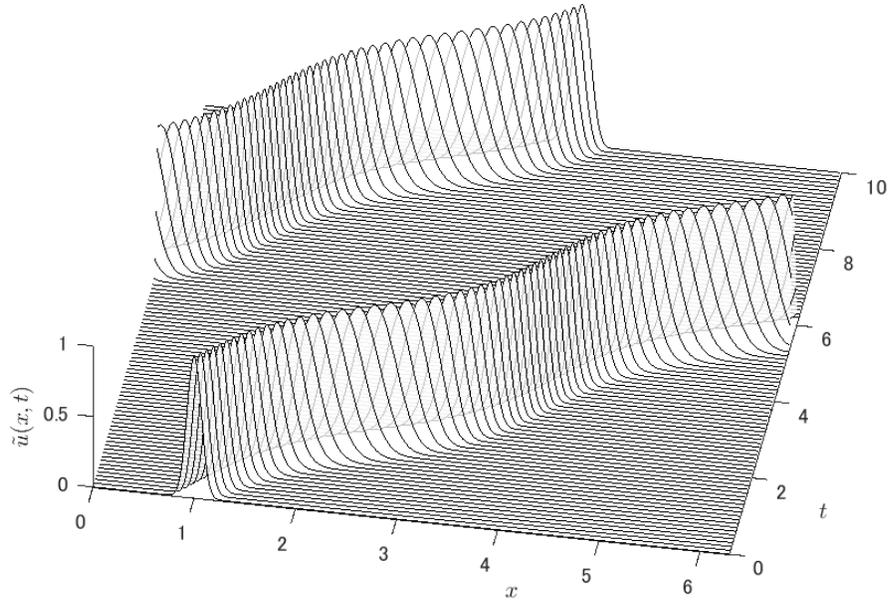}
\caption{Numerically computed approximate solution of \eqref{eqn:advec_Eq} in Example 1.}
\label{fig:fig2}
\end{figure}

Figure \ref{fig:fig3} shows rigorous upper bounds of $\|z(0)\|_X$ using \eqref{eqn:initial_err} with \eqref{eqn:truncate_ex1}.
A remarkable accuracy can be seen in Figure \ref{fig:fig3}, which illustrates the efficiency of the spectral method.
\begin{figure}[htbp]\em
{\centering
\includegraphics[width=0.75\textwidth]{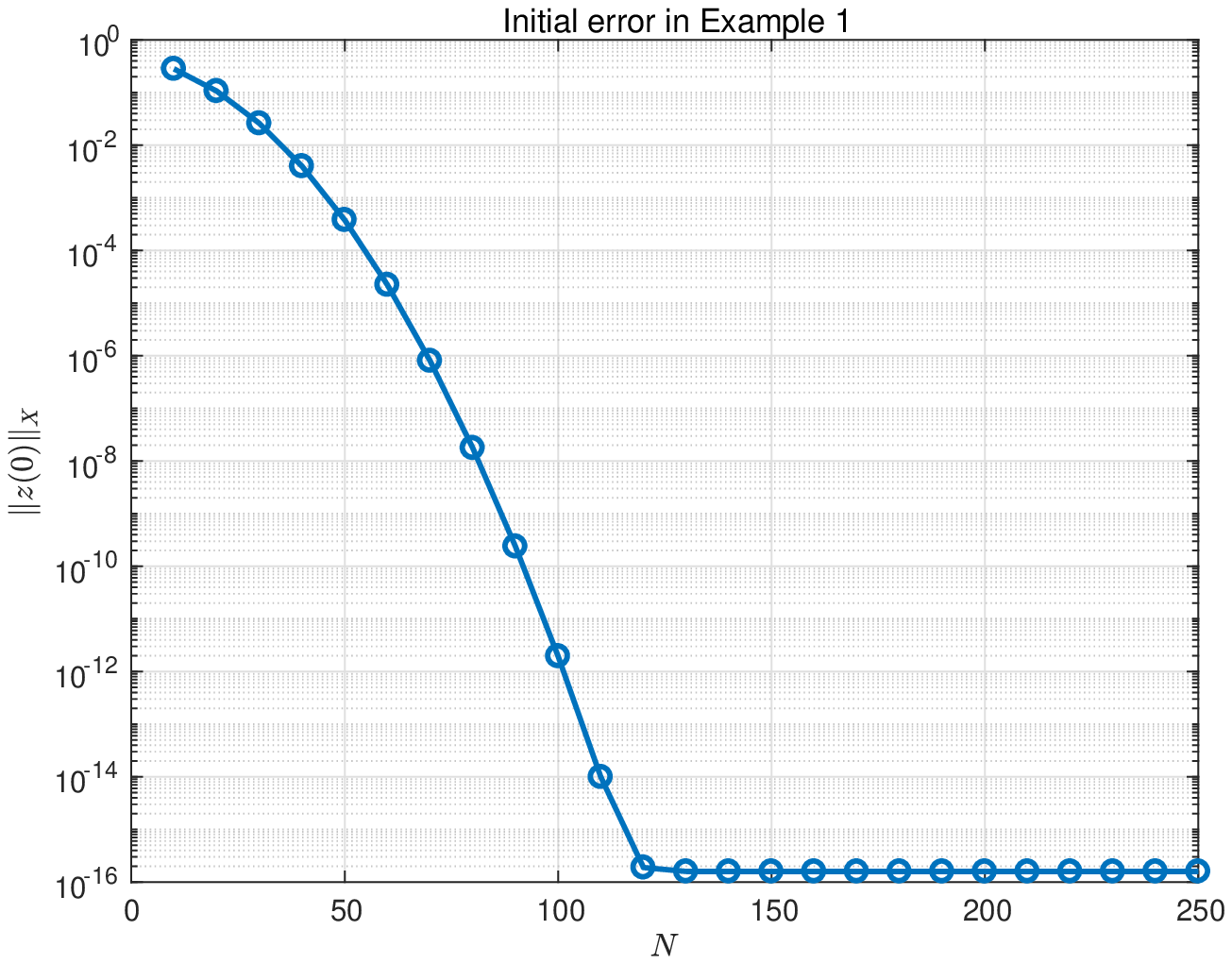}
\caption{Error estimates of the initial function $\|z(0)\|_X$ in Example 1.}
\label{fig:fig3}}
Rigorous upper bounds of \eqref{eqn:initial_err} with \eqref{eqn:truncate_ex1} are shown when $N=10,20,\dots,250$.
The accuracy of the error estimate is sufficiently small and the rounding errors take over when $N\ge120$.
Furthermore, the decay rate of the estimate looks exponential order, which is called ``spectral accuracy'' in \cite{bib:Trefethen2000}.
\end{figure}
Figure \ref{fig:fig4} also shows rigorous numerical results of the residual estimate discussed in Section \ref{sec:residual}.
This result implies that the number of Chebyshev basses in order to sufficiently reduce the residual estimate is increasing as $t_{\max}$ getting larger.
\begin{figure}[htbp]\em
{\centering
\includegraphics[width=0.8\textwidth]{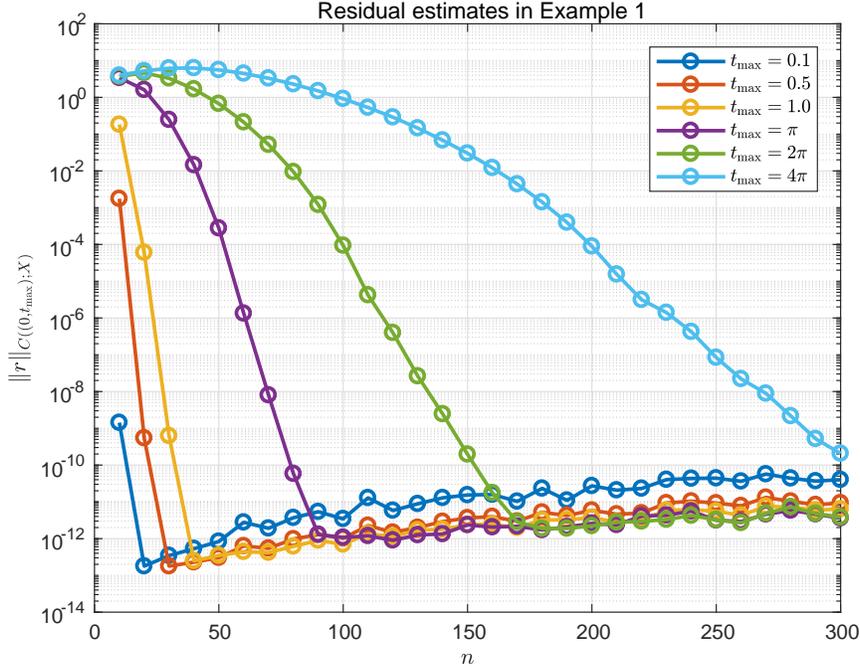}
\caption{Residual estimates $\|r\|_{C((0,t_{\max});X)}$ ($N=120$) in Example 1.}
\label{fig:fig4}}
Rigorous upper bounds of the residual estimate \eqref{eqn:residual_es} are shown when $n=10,20,\dots,300$ varying $t_{\max}=0.1,0.5,1,\pi,2\pi,4\pi$.
The accuracy becomes small and the rounding errors take over for large $n$.
The decay rate also looks exponential order with respect to $n$.
In particular, when one take the sufficiently large number of Chebyshev basses, the accuracy of the residual estimate becomes tiny even if $t_{\max}$ is large.
\end{figure}

Consequently, in Table \ref{Tab:Ex1}, we sum up each numerical result of our verified computing when $t_{\max}=0.1$, $0.5$, $\pi$, $2\pi$, $4\pi$ and $N=120$.
\begin{table}[ht]\em
\caption{Numerical results of verified computing  in Example 1.}
\centering
    \begin{tabular}{rrrrrrrrr}
    \hline
    \multicolumn{1}{c}{$t_{\max}$} & \multicolumn{1}{c}{$N$} & \multicolumn{1}{c}{$n$} & \multicolumn{1}{c}{initial error} & \multicolumn{1}{c}{residual} & \multicolumn{1}{c}{error}  & \multicolumn{1}{c}{app.~time} & \multicolumn{1}{c}{exec.~time}& \multicolumn{1}{c}{ratio}\\
    \hline
0.1 & 120 & 15 & 1.888e-16 & 6.6975e-14 & 7.0663e-15 & 0.2258 & 0.2634 & 1.166\\
0.5 & 120 & 28 & 1.888e-16 & 1.331e-13 & 7.5849e-14 & 1.0445 & 1.0881 & 1.042\\
1.0 & 120 & 39 & 1.888e-16 & 2.0349e-13 & 2.6433e-13 & 2.0794 & 2.1221 & 1.021\\
$\pi$ & 120 & 97 & 1.888e-16 & 7.3368e-13 & 5.5922e-12 & 6.8228 & 6.8702 & 1.007\\
$2\pi$ & 120 & 182 & 1.888e-16 & 1.6846e-12 & 7.46e-11 & 13.6139 & 13.6696 & 1.004\\
$4\pi$ & 120 & 355 & 1.888e-16 & 6.5085e-12 & 6.9576e-09 & 27.2039 & 27.2845 & 1.003\\
    \hline
    \end{tabular}%
\label{Tab:Ex1}
\end{table}
Here, \emph{``initial error''}, \emph{``residual''}, and \emph{``error''} denote the upper bound of $\|z(0)\|_X$, that of the residual $\|r\|_{C((0,t_{\max});X)}$, and that of $\|z\|_{C((0,t_{\max});X)}$ given in Theorem \ref{thm:main_thm}, respectively.
The \emph{``app.~time''} and \emph{``exec.~time''} are results of computational time on the second time scale for getting an approximate solution by {\em ode45 in Chebfun} and those of all computational time for our verified computing, respectively.
The ``ratio'' presents an additional ratio of verified computing given by the value of {\em ``exec.~time'' divided by ``app.~time''}.
The main feature of these results is the accuracy of the residual estimate, which is remarkably high accurate for verified computing to PDEs.
It is worth noting that the rigorous error estimate is still small even if $t_{\max}$ is getting larger.
Another feature of the results is the speed of verified numerical computations.
In Table \ref{Tab:Ex1}, we observe that it requires at most tens of percent of additional time with respect to the execute time of the ODE integrator in \emph{Chebfun}.
This indicates that our method of verified computing is executed as fast as (non-rigorous) numerical computations.

\subsection{Example 2}
The second example is the initial-boundary value problem \eqref{eqn:advec_Eq} with
\begin{align*}
c(x)=1+0.49\cos 2x,~~u_0(x)=\frac{3}{5+4\cos x}.
\end{align*}
The profile of the variable coefficient $c(x)$ and that of the initial function $u_0(x)$ are displayed in Figure \ref{fig:fig5}.
\begin{figure}[htbp]\em
\centering
\includegraphics[width=0.8\textwidth]{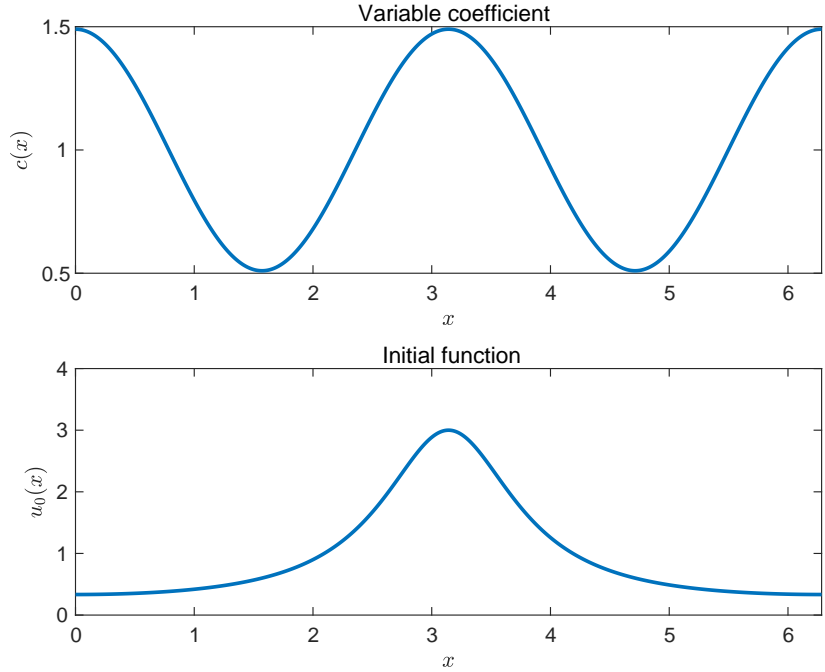}
\caption{Profile of the variable coefficient (upper) and that of the initial function (lower) in Example 2.}
\label{fig:fig5}
\end{figure}
%
The Fourier coefficients of the initial function are given by $a_k(0)=(-2)^{-|k|}$.
Because the floating-point number is binary format with base $2$, the rounding error in calculating the coefficients becomes zero.
Then, the initial error is only the truncated error.
Such a truncated error of the initial function in \eqref{eqn:initial_err} is estimated by
\begin{align}\label{eqn:truncate_ex2}
\left(\sum_{|k|> N}|a_k(0)|^2\right)^{1/2}=\left(2\sum_{k=N+1}^\infty\left|(-2)^{-k}\right|^2\right)^{1/2}=\sqrt{\frac{2}{3}}\,2^{-N}.
\end{align}
Furthermore, the Fourier coefficients of $c(x)$ are given by
\[
c_k=\begin{cases}
\frac{0.49}{2},&k=-2,\\
1,&k=0,\\
\frac{0.49}{2},&k=2,\\
0,&\mbox{otherwise}.
\end{cases}
\]
The sequence $c=(c_k)_{k\in\mathbb{Z}}$ immediately satisfies $Bc\in \ell^1$ and \eqref{eqn:ass_c}.
Theorem \ref{thm:sg_generate} follows that the operator $A$ defined in \eqref{eqn:opA} generates the $C_0$ semigroup of $G(1,\omega)$ class on the sequence space $X$ with
\[
\omega=\frac{1}{2}\|Bc\|_1=0.49.
\]
In Figure \ref{fig:fig6}, we plot the behavior of the numerically computed approximate solution $\tilde{u}$.
\begin{figure}[htbp]\em
\centering
\includegraphics[width=0.8\textwidth]{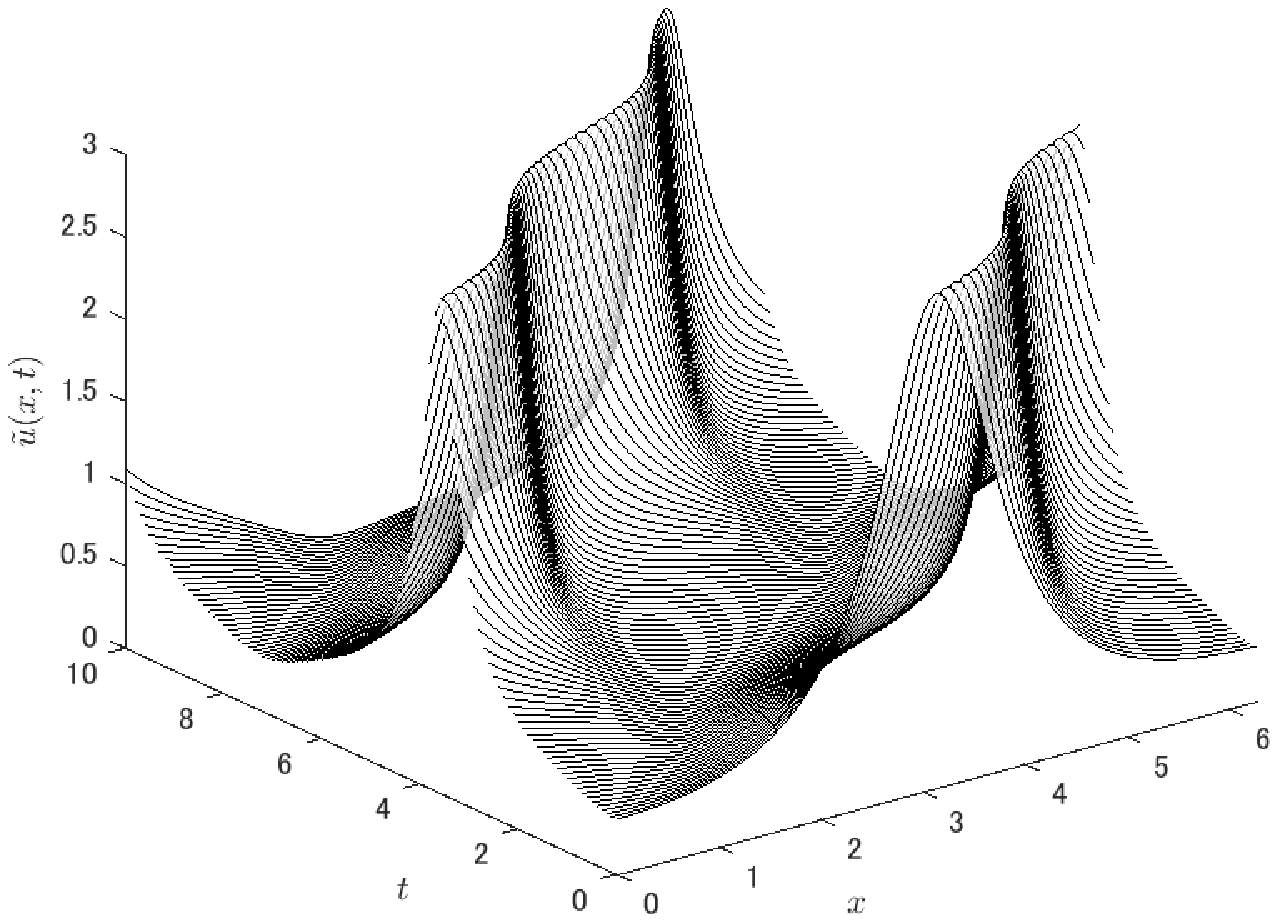}
\caption{Numerically computed approximate solution of \eqref{eqn:advec_Eq} in Example 2.}
\label{fig:fig6}
\end{figure}

Figure \ref{fig:fig7} shows rigorous upper bounds of the residual estimate discussed in Section \ref{sec:residual}.
The results are almost same as that in the previous example.
The residual estimate becomes sufficiently small as increasing the number of Chebyshev basses even if $t_{\max}=4\pi(\approx 12.57)$.
\begin{figure}[htbp]\em
{\centering
\includegraphics[width=0.8\textwidth]{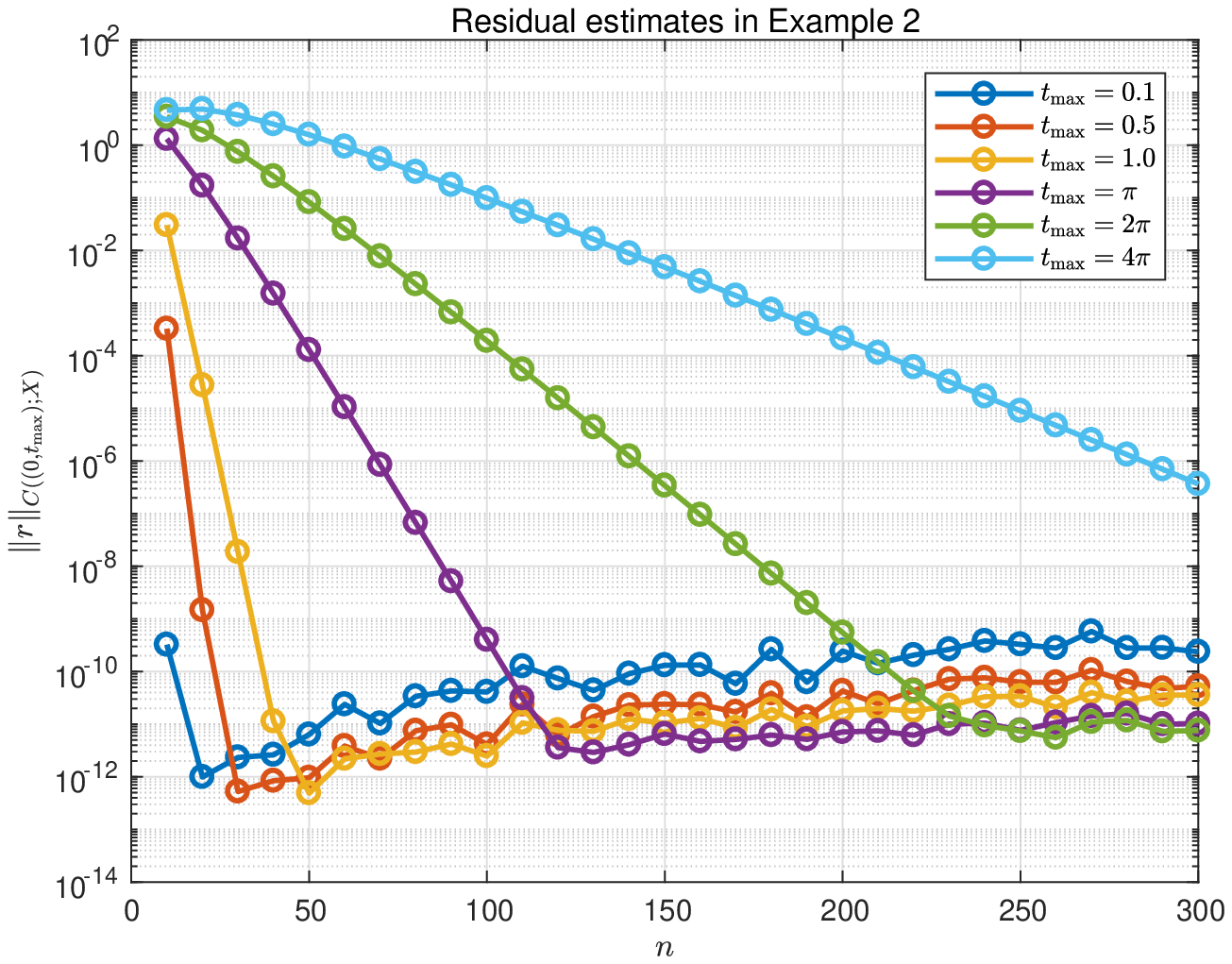}
\caption{Residual estimates $\|r\|_{C((0,t_{\max});X)}$ ($N=150$) in Example 2.}
\label{fig:fig7}}
Rigorous upper bounds of the residual estimate \eqref{eqn:residual_es} are shown when $n=10,20,\dots,300$ varying $t_{\max}=0.1,0.5,1,\pi,2\pi,4\pi$.
The decay rate also looks exponential order with respect to $n$.
\end{figure}
Table \ref{Tab:Ex2} lists the \emph{best} numerical result of verified computing when $t_{\max}=0.1,~0.5,~\pi,~2\pi,~4\pi$.
\begin{table}[ht]\em
\caption{Numerical results of verified computing in Example 2.}
\centering
    \begin{tabular}{rrrrrrrrr}
    \hline
    \multicolumn{1}{c}{$t_{\max}$} & \multicolumn{1}{c}{$N$} & \multicolumn{1}{c}{$n$} & \multicolumn{1}{c}{initial error} & \multicolumn{1}{c}{residual} & \multicolumn{1}{c}{error}  & \multicolumn{1}{c}{app.~time} & \multicolumn{1}{c}{exec.~time}& \multicolumn{1}{c}{ratio}\\
    \hline
0.1 & 110 & 13 & 2.2662e-17 & 3.1829e-13 & 3.2645e-14 & 0.5219 & 0.5533 & 1.060\\
0.5 & 152 & 29 & 1.0806e-23 & 3.8705e-13 & 2.1929e-13 & 3.4671 & 3.5052 & 1.011\\
1.0 & 194 & 46 & 5.1528e-30 & 6.4904e-13 & 8.3755e-13 & 12.7719 & 12.8210 & 1.004\\
$\pi$ & 223 & 121 & 2.2239e-34 & 2.7793e-12 & 2.077e-11 & 48.1160 & 48.1829 & 1.001\\
$2\pi$ & 190 & 247 & 2.0611e-29 & 8.2819e-12 & 3.504e-10 & 74.3893 & 74.4721 & 1.001\\
$4\pi$ & 200 & 536 & 6.441e-31 & 1.6484e-11 & 1.5854e-08 & 166.2447 & 166.4098 & 1.001\\
    \hline
    \end{tabular}%
\label{Tab:Ex2}
\end{table}
Here, ``\emph{best}'' means that our code returns the smallest rigorous error estimate given in Theorem \ref{thm:main_thm}.
As indicated in Example 1, the high accuracy and high speed of verified numerical computations catch one's attention to illustrate the efficiency of the method.

\subsection{Example 3}
For the final example, we consider a slightly more complicated initial-boundary value problem \eqref{eqn:advec_Eq} than that in the previous two examples, where
\begin{align*}
c(x)=-1+0.3\sin 3x-0.19\cos 2x,~~u_0(x)=\frac{3}{5+4\cos x}.
\end{align*}
The profile of the variable coefficient $c(x)$ and that of the initial function $u_0(x)$ are displayed in Figure \ref{fig:fig8}.
\begin{figure}[htbp]\em
\centering
\includegraphics[width=0.8\textwidth]{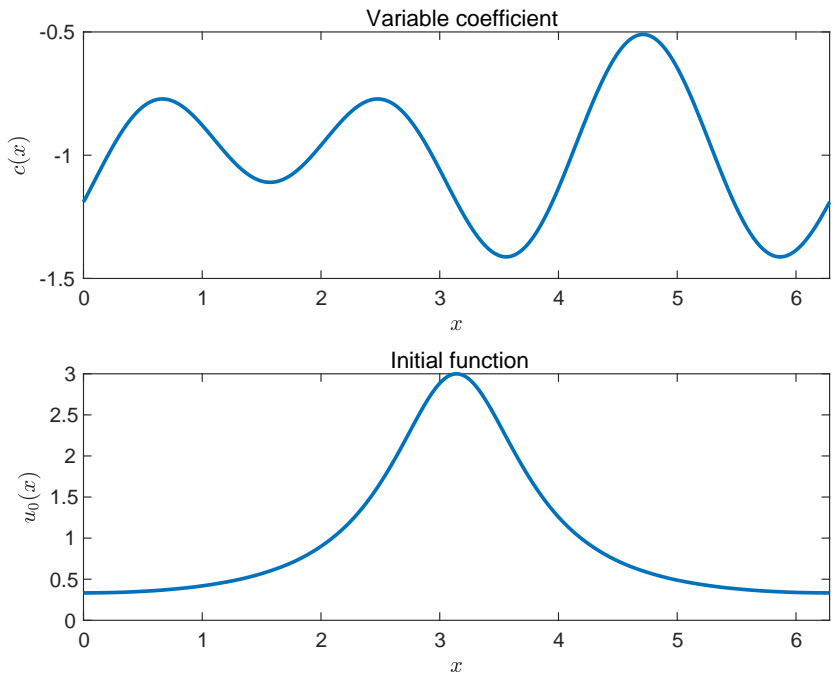}
\caption{Profile of the variable coefficient (upper) and that of the initial function (lower) in Example 3.}
\label{fig:fig8}
\end{figure}
The truncated error of the initial function in \eqref{eqn:initial_err} is the same as that in \eqref{eqn:truncate_ex2}.
The Fourier coefficients of $c(x)$ are given by
\[
c_k=\begin{cases}
\frac{0.3}{2i},&k=-3,\\
-\frac{0.19}{2},&k=-2,\\
-1,&k=0,\\
-\frac{0.19}{2},&k=2,\\
-\frac{0.3}{2i},&k=3,\\
0,&\mbox{otherwise}.
\end{cases}
\]
The sequence $c=(c_k)_{k\in\mathbb{Z}}$ immediately satisfies $Bc\in \ell^1$ and \eqref{eqn:ass_c}.
Theorem \ref{thm:sg_generate} follows that the operator $A$ defined in \eqref{eqn:opA} generates the $C_0$ semigroup of $G(1,\omega)$ class on the sequence space $X$ with
\[
	\omega=\frac{1}{2}\|Bc\|_1=0.64.
\]
In Figure \ref{fig:fig9}, we plot the behavior of the numerically computed approximate solution $\tilde{u}$.
\begin{figure}[htbp]\em
\centering
\includegraphics[width=0.8\textwidth]{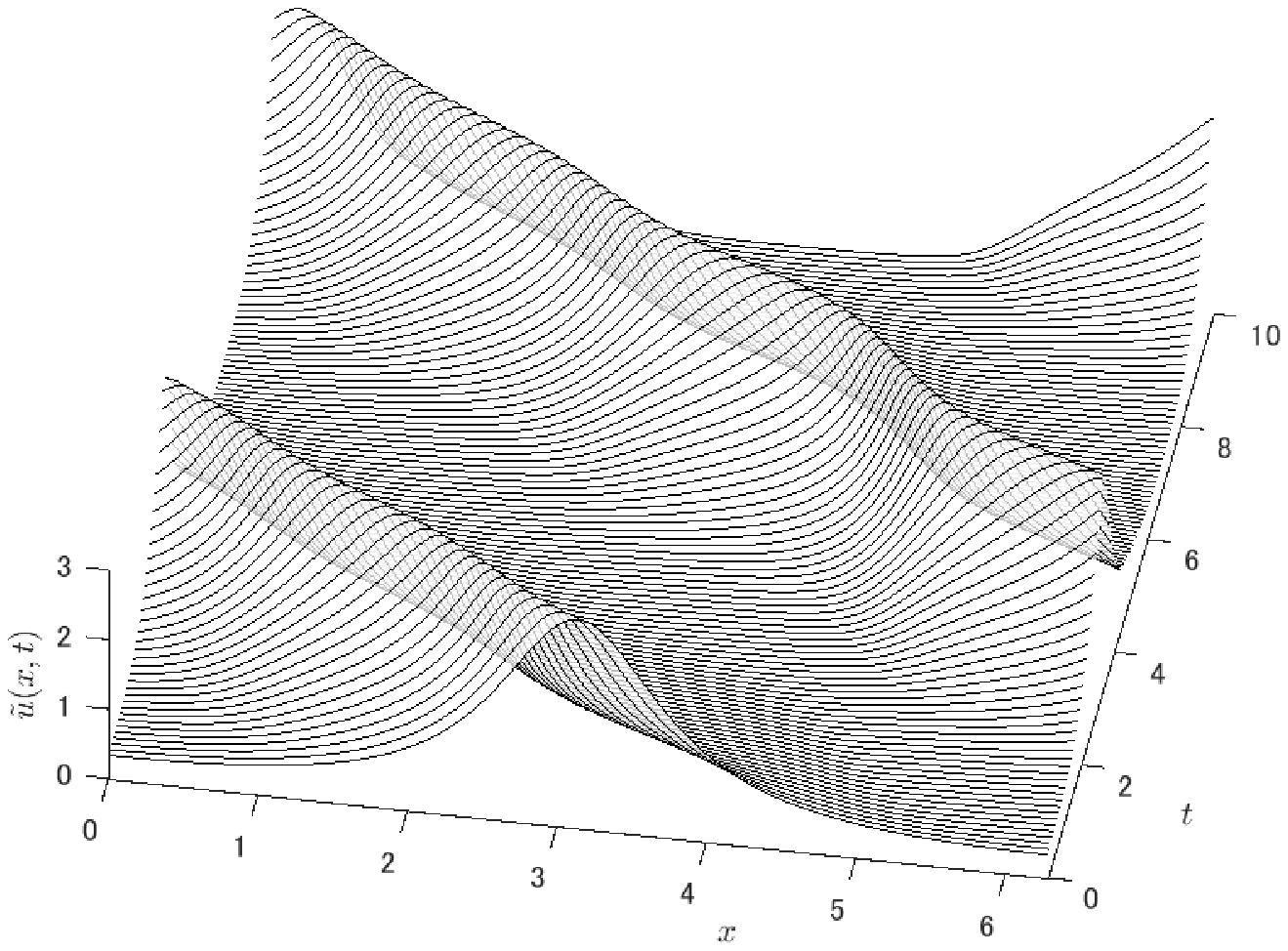}
\caption{Numerically computed approximate solution of \eqref{eqn:advec_Eq} in Example 3.}
\label{fig:fig9}
\end{figure}

Table \ref{Tab:Ex3} lists the \emph{best} numerical result of verified computing when $t_{\max}=0.1,~0.5,~\pi,~2\pi,~4\pi$.
\begin{table}[ht]\em
\caption{Numerical results of verified computing in Example 3.}
\centering
    \begin{tabular}{rrrrrrrrr}
    \hline
    \multicolumn{1}{c}{$t_{\max}$} & \multicolumn{1}{c}{$N$} & \multicolumn{1}{c}{$n$} & \multicolumn{1}{c}{initial error} & \multicolumn{1}{c}{residual} & \multicolumn{1}{c}{error}  & \multicolumn{1}{c}{app.~time} & \multicolumn{1}{c}{exec.~time}& \multicolumn{1}{c}{ratio}\\
    \hline
0.1 & 130 & 14 & 2.2131e-20 & 4.238e-13 & 4.3765e-14 & 0.1897 & 0.2266 & 1.195\\
0.5 & 87 & 28 & 6.5637e-14 & 3.8734e-13 & 3.1864e-13 & 0.5942 & 0.6274 & 1.056\\
1.0 & 200 & 40 & 6.441e-31 & 5.0248e-13 & 7.0385e-13 & 9.3328 & 9.3987 & 1.007\\
$\pi$ & 160 & 127 & 6.7539e-25 & 1.6567e-12 & 1.6743e-11 & 6.6772 & 6.7346 & 1.009\\
$2\pi$ & 197 & 251 & 1.8218e-30 & 4.8822e-12 & 4.1781e-10 & 54.0988 & 54.2070 & 1.002\\
$4\pi$ & 232 & 526 & 9.8282e-36 & 2.3755e-11 & 1.1541e-07 & 156.3342 & 156.5644 & 1.001\\
    \hline
    \end{tabular}%
\label{Tab:Ex3}
\end{table}
Our method of verified computing also gives tight error bounds between the exact solution and its numerically computed approximate solution.
Moreover, it is executed in high speed.
These are benefit of using the spectral method.

\section*{Conclusion}
In the present paper, we have derived a methodology of verified computing for solutions to 1-dimensional advection equations with variable coefficients based on the Fourier-Chebyshev spectral method.
Main contribution of this paper is to provide a method of verified computing using the $C_0$ semigroup on the complex sequence space $\ell^2$, which comes from the Fourier series of the solution.
We have applied the Lumer-Phillips theorem for an operator $A$ generating the $C_0$ semigroup on the sequence space.
A sufficient condition for the generator of the $C_0$ semigroup is shown in Theorem \ref{thm:sg_generate}.
In Theorem \ref{thm:main_thm}, we have introduced the rigorous error estimate between  the exact solution and its numerically computed approximate solution.
Moreover, numerical results given in Section \ref{sec:num_result} show that the rigorous error estimate is quite accurate and the computational time is as fast as that of usual numerical computations.
In the field of verified numerical computations, the provided methodology is a foundational approach for initial-boundary value problems of hyperbolic PDEs.

We conclude this paper by discussing some potential extensions.
Our methodology in the present paper could be extended to mathematical models of nonlinear waves.
One example is mathematical models of the traffic stream $\rho_t+(\rho u)_x=0$, where $\rho$ is the traffic density and $u$ denotes vehicle's velocity.
More generally, the provided method could be extended to rigorously compute solutions to multi-dimensional advection equations so-called \emph{Mass transport equation} $u_t+\nabla\cdot(\bm{v}u)=0$ with a given stationary velocity field $\bm{v}$.
Furthermore, a more challenging extension is to rigorously compute solutions of the systems of conservation law $u_t+(f(u))_x=0$ for an unknown function $u(x,t)$.
In all of these extensions, the main difficulty is the loss of smoothness caused by the nonlinearity.
When that happens, the spectral method typically fails and so-called \textit{Gibbs phenomenon} occurs.
On the other hand, there is a vast amount of results (cf. \cite[Chapter 13]{bib:Hesthaven2017}) on the reduction and elimination of the Gibbs phenomenon.
Combining with such techniques, we believe that our semigroup approach still works well in the nonlinear problems.
One interesting future project could be to apply the present method to the \textit{radii-polynomial approach} developed in \cite{bib:Figueras2016,bib:Hungria2016,bib:Lessard2017} etc.\ for rigorous spectral methods in nonlinear hyperbolic PDEs.

\vspace{4mm}

\par
\textbf{Acknowledgment.}
The authors express their sincere gratitude to Dr. M. Sobajima in Tyokyo University of Science for his essential suggestion on generating the $C_0$ semigroup on sequence spaces.
This work was partially supported by JSPS Grant-in-Aid for Early-Career Scientists, No.\ 18K13453.

\bibliographystyle{abbrv}
\bibliography{1dadvection}

\end{document}